\documentclass[reqno, 11pt]{amsart}
\usepackage{amsmath,amsthm,amssymb,bbm,color,verbatim,tikz}\usetikzlibrary{matrix}

\input xy 
\xyoption{all}

\numberwithin{equation}{section}


\usepackage{hyperref}
\hypersetup{colorlinks=true, linkcolor=blue, citecolor=blue}

\newcommand{\CARD}{{\rm CARD}}
\newcommand{\REG}{{\rm REG}}

\newcommand{\GCH}{{\rm GCH}}
\newcommand{\SCH}{{\rm SCH}}

\newcommand{\ORD}{\mathop{{\rm ORD}}}

\renewcommand{\P}{{\mathbb P}}
\newcommand{\Q}{{\mathbb Q}}

\renewcommand{\S}{\mathbb{S}}

\newcommand{\QQ}{Q}

\newcommand{\Add}{\mathop{\rm Add}}

\newcommand{\st}{\mid}
\newcommand{\restrict}{\upharpoonright}

\renewcommand{\>}{\rangle}
\newcommand{\elemsub}{\prec}
\newcommand{\elesub}{\prec}

\newcommand{\dom}{\mathop{\rm dom}}
\newcommand{\ran}{\mathop{\rm ran}}

\newcommand{\cf}{\mathop{\rm cf}}

\renewcommand{\and}{\mathop{\&}}







\newtheorem{theorem}{Theorem}
\newtheorem{lemma}{Lemma}
\newtheorem{corollary}{Corollary}

\theoremstyle{definition}
\newtheorem{question}{Question}
\newtheorem{remark}{Remark}

\newtheorem{definition}{Definition}

%


\subjclass[2000]{03E05, 03E10, 03E35, 03E40, 03E50, 03E55}
\date{\today}

\begin{document}

\title{Easton functions and supercompactness}

\author[Brent Cody]{Brent Cody}
\address[Brent Cody]{ 
Virginia Commonwealth University,
Department of Mathematics and Applied Mathematics,
1015 Floyd Avenue, Richmond, Virginia 23284, United States.} 
\email[B. ~Cody]{bmcody@vcu.edu} 

\author[Sy-David Friedman]{Sy-David Friedman}
\address[Sy-David Friedman]{ 
The University of Vienna,
Kurt G\"odel Research Center for Mathematical Logic,
W\"ahringer Strasse 25, A-1090 Wien, Austria.} 
\email[S. ~Friedman]{sdf@logic.univie.ac.at} 
\urladdr{}

\author[Radek Honzik]{Radek Honzik}
\address[Radek Honzik]{ 
Kurt G{\"o}del Research Center for Mathematical Logic, Vienna and Department of Logic, Charles University, Prague, Czech Republic.} 
\email[R. ~Honzik]{radek.honzik@ff.cuni.cz} 
\urladdr{}

\maketitle


\begin{abstract}

Suppose $\kappa$ is $\lambda$-supercompact witnessed by an elementary embedding $j:V\to M$ with critical point $\kappa$, and further suppose that $F$ is a function from the class of regular cardinals to the class of cardinals satisfying the requirements of Easton's theorem: (1) $\forall\alpha$ $\alpha<\cf(F(\alpha))$ and (2) $\alpha<\beta$ $\implies$ $F(\alpha)\leq F(\beta)$. In this article we address the question: assuming $\GCH$, what additional assumptions are necessary on $j$ and $F$ if one wants to be able to force the continuum function to agree with $F$ globally, while preserving the $\lambda$-supercompactness of $\kappa$?

We show that, assuming $\GCH$, if $F$ is any function as above, and in addition for some regular cardinal $\lambda>\kappa$ there is an elementary embedding $j:V\to M$ with critical point $\kappa$ such that $\kappa$ is closed under $F$, the model $M$ is closed under $\lambda$-sequences, $H(F(\lambda))\subseteq M$, and for each regular cardinal $\gamma\leq \lambda$ one has $(|j(F)(\gamma)|=F(\gamma))^V$, \underline{then} there is a cardinal-preserving forcing extension in which $2^\delta=F(\delta)$ for every regular cardinal $\delta$ and $\kappa$ remains $\lambda$-supercompact. This answers a question of \cite{CodyMagidor}.

\end{abstract}

\section{Introduction}\label{sectionintroduction}

In this article we address the following question, which is posed in \cite{CodyMagidor}. 
\begin{question}\label{mainquestion}
Given a $\lambda$-supercompact cardinal $\kappa$ and assuming $\GCH$, what behaviors of the continuum function on the regular cardinals can be forced while preserving the $\lambda$-supercompactness of $\kappa$, and from what hypotheses can such behaviors of the continuum function be obtained? 
\end{question}

Let us first consider the special case where $\kappa$ is $\kappa$-supercompact, in other words $\kappa$ is measurable. Silver proved that if $\kappa$ is $\kappa^{++}$-supercompact and $\GCH$ holds, then there is a cofinality-preserving forcing extension in which $\kappa$ remains measurable and $2^\kappa=\kappa^{++}$, but one can also obtain such a model from a much weaker hypothesis. Woodin proved that the existence of a measurable cardinal $\kappa$ such that $2^\kappa=\kappa^{++}$ is equiconsistent with the existence of an elementary embedding $j:V\to M$ with critical point $\kappa$ such that $j(\kappa)>\kappa^{++}$ and $M^\kappa\subseteq M$. The forward direction of Woodin's equiconsistency is trivial, and for the backward direction the embedding is lifted to a certain forcing extension $V[G][H][g_0]$ where $g_0$ is an ``extra forcing'' necessary for carrying out a surgical modification of a generic filter on the $M$-side (see \cite[Theorem 25.1]{Cummings:Handbook} or \cite[Theorem 36.2]{Jech:Book}). A more uniform method for proving Woodin's equiconsistency, in which no ``extra forcing'' is required, is given in \cite{FriedmanThompson:PerfectTreesAndElementaryEmbeddings}. This method involves lifting an elementary embedding through Sacks forcing on uncountable cardinals, an idea which has found many additional applications (see \cite{FriedmanMagidor:TheNumberOfNormalMeasures}, \cite{FriedmanHonzik:EastonsTheoremAndLargeCardinals}, \cite{FriedmanZdomskyy2010}, \cite{Honzik-GlobalSingularization}, \cite{DobrinenFriedman:HomogeneousIteration}, \cite{FriedmanHonzik:EastonsThmAndLCFromOptimal} and \cite{FriedmanHonzik:SupercompactnessAndFailuresOfGCH}). The uniformity of the method led to answers \cite{FriedmanHonzik:EastonsTheoremAndLargeCardinals} to Question 1 in the case that $\kappa$ is a measurable cardinal and in the case that $\kappa$ is a strong cardinal. 

In a result analagous to Woodin's equiconsistency mentioned above, the first author proved \cite{Cody:TheFailureOfGCHAtADegreeOfSupercompactness} the equiconsistency of the following three hypotheses.
\begin{itemize}
\item[(i)] There is a cardinal $\kappa$ that is $\lambda$-supercompact and $2^\kappa>\lambda^{++}$.
\item[(ii)] There is a cardinal $\kappa$ that is $\lambda$-supercompact and $2^\lambda>\lambda^{++}$.
\item[(iii)] There is an elementary embedding $j:V\to M$ with critical point $\kappa$ such that $j(\kappa)>\lambda^{++}$ and $M^\lambda\subseteq M$.
\end{itemize}
In the argument of \cite{Cody:TheFailureOfGCHAtADegreeOfSupercompactness}, a model of (ii) is obtained from a model of (iii) by lifting the embedding $j$ to a forcing extension of the form $V[G][H][g_0]$ by using Woodin's technique of surgically modifying a generic filter. However, in the final model, $\kappa$ is $\lambda$-supercompact and one has $2^\kappa=2^\lambda=\lambda^{++}$, so the final model satisfies both (i) and (ii). Furthermore, it is remarked in \cite{Cody:TheFailureOfGCHAtADegreeOfSupercompactness} that the surgery argument does not seem to yield a model with $\GCH$ on the interval $[\kappa,\lambda)$ and $2^\lambda=\lambda^{++}$, where $\kappa$ is $\lambda$-supercompact.

The second and third authors showed that the more uniform method involving Sacks forcing on uncountable cardinals can be used to address this discordance. Indeed, it is shown in \cite{FriedmanHonzik:SupercompactnessAndFailuresOfGCH} that from the hypothesis (iii) above, and assuming $\GCH$, there is a cofinality-preserving forcing extension in which $\kappa$ remains $\lambda$-supercompact, $\GCH$ holds on the interval $[\kappa,\lambda)$ and $2^\lambda=\lambda^{++}$. The following question is posed in \cite{FriedmanHonzik:SupercompactnessAndFailuresOfGCH}. Starting with a model of (iii) and $\GCH$, is there a cofinality-preserving forcing extension in which $\kappa$ is $\lambda$-supercompact and for some regular cardinal $\gamma$ with $\kappa<\gamma<\lambda$ one has $\GCH$ on $[\kappa,\gamma)$ and $2^\gamma=\lambda^{++}$? This question was recently answered in \cite{CodyMagidor} where it is shown that that Woodin's method of surgically modifying a generic filter to lift an embedding can be extended to include the case where modifications are made on ``ghost-coordinates.'' Indeed \cite{CodyMagidor} estabilshes
that if $\GCH$ holds, $F:[\kappa,\lambda]\cap\REG\to\CARD$ is any function satisfying Easton's requirements 
\begin{itemize}
\item[(E1)] $\alpha<\cf(F(\alpha))$ and 
\item[(E2)] $\alpha<\beta$ implies $F(\alpha)\leq F(\beta)$
\end{itemize}
where $\lambda>\kappa$ is a regular cardinal, and there is a $j:V\to M$ with critical point $\kappa$ such that $j(\kappa)>F(\lambda)$ and $M^\lambda\subseteq M$, then there is a cofinality-preserving forcing extension in which $\kappa$ remains $\lambda$-supercompact and $2^\gamma=F(\gamma)$ for every regular cardinal $\gamma$ with $\kappa\leq\gamma\leq\lambda$. This provides an answer to the above Question \ref{mainquestion} if we restrict our attention to controlling the continuum function only on the interval $[\kappa,\lambda]$ while preserving the $\lambda$-supercompactness of $\kappa$. 

In this article we combine the methods of \cite{FriedmanHonzik:EastonsTheoremAndLargeCardinals} and \cite{CodyMagidor} to address Question \ref{mainquestion} in the context of controlling the continuum function at all regular cardinals by proving the following theorem.

\begin{theorem}\label{theorem1}
Assume $\GCH$. Suppose $F:\REG\to\CARD$ is a function satisfying Easton's requirements (E1) and (E2), for some regular cardinal $\lambda>\kappa$ 
there is an elementary embedding $j:V\to M$ with critical point $\kappa$ such that $\kappa$ is closed under $F$, the model $M$ is closed under $\lambda$-sequences, $H(F(\lambda))\subseteq M$, and for each regular cardinal $\gamma\leq \lambda$ one has $(|j(F)(\gamma)|=F(\gamma))^V$. Then there is a cardinal-preserving forcing extension in which $2^\delta=F(\delta)$ for every regular cardinal $\delta$ and $\kappa$ remains $\lambda$-supercompact

\end{theorem}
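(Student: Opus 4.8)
The plan is to force $F$ with the global Easton-support product $\P=\prod_{\delta\in\REG}\Add(\delta,F(\delta))$ and to keep $\kappa$ supercompact by lifting the given embedding $j$. Under $\GCH$ the usual Easton analysis --- for each regular $\delta$ factor $\P$ as a $\delta^+$-c.c.\ part of size $\le F(\delta)$ times a part closed under $\delta$-sequences, then count nice names --- shows that $\P$ preserves cofinalities and cardinals and forces $2^\delta=F(\delta)$ for every regular $\delta$; here (E1) and (E2) are exactly what is needed. Thus the entire difficulty is the preservation of the $\lambda$-supercompactness of $\kappa$. Fix a $\P$-generic $G$ and split it as $G=G_{<\kappa}\times G_{[\kappa,\lambda]}\times G_{>\lambda}$ according to $\P=\P_{<\kappa}\times\P_{[\kappa,\lambda]}\times\P_{>\lambda}$.

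I would first discard the top. The least coordinate appearing in $\P_{>\lambda}$ is $\lambda^+$, so $\P_{>\lambda}$ is closed under $\lambda$-sequences and adds no new subset of $\lambda$; since $|P_\kappa\lambda|=\lambda^{<\kappa}=\lambda$ under $\GCH$, it adds no new subset of $P_\kappa\lambda$ either. Hence any normal fine measure on $P_\kappa\lambda$ living in $V[G_{<\kappa}\times G_{[\kappa,\lambda]}]$ continues to witness the $\lambda$-supercompactness of $\kappa$ in $V[G]$. It therefore suffices to lift $j$ to the extension by the set forcing $\P_{\le\lambda}=\P_{<\kappa}\times\P_{[\kappa,\lambda]}$, a poset of size $F(\lambda)$; this use of a highly closed tail to reach all of the remaining regular cardinals is the global ingredient drawn from \cite{FriedmanHonzik:EastonsTheoremAndLargeCardinals}.

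Now lift $j$. Its image $j(\P_{\le\lambda})$ is the Easton product computed in $M$ over the $M$-regular $\gamma\le j(\lambda)$ of $\Add(\gamma,j(F)(\gamma))^M$; because $M^\lambda\subseteq M$ and $H(F(\lambda))\subseteq M$, the models $M$ and $V$ agree on cardinals, cofinalities and the relevant factor posets at coordinates $\le\lambda$. Split the coordinates of $j(\P_{\le\lambda})$ into the blocks $[0,\kappa)$, $[\kappa,\lambda]$ and $(\lambda,j(\lambda)]$. The block $[0,\kappa)$ is literally $\P_{<\kappa}$ (as $\crit(j)=\kappa$ and $\kappa$ is closed under $F$, one has $j\restriction\P_{<\kappa}=\id$), so $G_{<\kappa}$ serves there and trivially contains the corresponding master conditions. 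The decisive structural observation is that $j(\kappa)>\lambda$, so for every $p\in\P_{[\kappa,\lambda]}$ the support of $j(p)$ is contained in $j[[\kappa,\lambda]]\subseteq(\lambda,j(\lambda)]$; consequently the middle block $[\kappa,\lambda]$ carries no master-condition constraint and I would fill it by transporting $G_{[\kappa,\lambda]}$ through the $V$-bijections $b_\gamma\colon F(\gamma)\to j(F)(\gamma)$ provided by the hypothesis $(|j(F)(\gamma)|=F(\gamma))^V$. Since a dense subset of $\Add(\gamma,j(F)(\gamma))^M$ lying in $M$ pulls back under $b_\gamma$ to a dense set in $V$, this transport is $M$-generic, and the pieces combine with $G_{<\kappa}$ to an $M$-generic on $[0,\lambda]$.

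The work, and the step I expect to be hardest, is the high block $(\lambda,j(\lambda)]$, where the full family of master conditions $j[G_{[\kappa,\lambda]}]$ must be accommodated. When $F(\gamma)>\lambda$ the restriction $j\restriction(F(\gamma)\times\gamma)$ need not belong to $M$, so these master conditions do not amalgamate into a single condition of $M$ and one cannot simply pass below a master condition. Following \cite{CodyMagidor}, I would instead produce an $M$-generic $H$ for the high block by their transfer of $G_{[\kappa,\lambda]}$ and then \emph{surgically} alter $H$ on the coordinates $j[F(\gamma)]$ --- the ``ghost coordinates'' sitting inside the $M$-cardinal $j(F)(\gamma)$, which $V$ collapses to $F(\gamma)$ --- so that the modified filter $H^\ast$ contains $j[G_{[\kappa,\lambda]}]$. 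The heart of the matter is the surgery lemma: that $H^\ast$ remains $M$-generic despite being altered on a set of coordinates that is not even a member of $M$. This is exactly the ghost-coordinate form of Woodin's surgery isolated in \cite{CodyMagidor}, and its verification relies on $M^\lambda\subseteq M$, on $H(F(\lambda))\subseteq M$ to keep the relevant antichains within reach, and on the cardinality hypothesis $(|j(F)(\gamma)|=F(\gamma))^V$ to guarantee that the set of corrected coordinates is thin enough. Granting it, $j$ lifts to $j\colon V[G_{\le\lambda}]\to M[j(G_{\le\lambda})]$, the target is still closed under $\lambda$-sequences in $V[G_{\le\lambda}]$, and the lift witnesses that $\kappa$ is $\lambda$-supercompact there, which by the reduction of the second paragraph completes the argument.
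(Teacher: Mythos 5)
Your reduction to the set forcing at coordinates $\le\lambda$ conceals the main difficulty, and the proposal has a genuine gap at the high block. The poset $j(\P_{\le\lambda})$ contains a nontrivial factor $\Add(\gamma,j(F)(\gamma))^M$ at \emph{every} $M$-regular $\gamma\in(\lambda,j(\lambda)]$ --- in particular at every $V$-regular $\gamma\in(\lambda,F(\lambda)]$ and at every $M$-regular $\gamma\in(F(\lambda),j(\kappa))$. Your bijection transport covers only the block $[\kappa,\lambda]$, and the surgery/master-condition issue concerns only the image coordinates $j(\gamma)$ for $\gamma\in[\kappa,\lambda]\cap\REG$; neither says anything about the factors just listed. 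Those factors are only ${<}\gamma$-closed while $M$ supplies on the order of $F(\lambda)$ dense subsets of each, so for $\gamma\le F(\lambda)$ they cannot in general be met by diagonalization inside $V[G_{\le\lambda}]$. This is precisely why the paper's first stage forces all the way up to $F(\lambda)$ (the factor $\Q_{[\lambda_0,F(\lambda)]}$) rather than stopping at $\lambda$: the $V$-generic $g_{[\gamma_0,F(\lambda)]}$ is what gets restricted to $M$'s version of the product (and, when $F(\lambda)$ is singular, twisted by an automorphism) to produce $g^M_{[\gamma_0,\lambda_1^M)}$ in Lemmas \ref{lemmaregular} and \ref{lemmasingular}, after which the remaining interval $[\lambda_1^M,j(\kappa))$ is ${\le}F(\lambda)$-closed in $M[\cdots]$ and \emph{can} be diagonalized. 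By discarding everything above $\lambda$ into a closed tail that is never consulted, you have removed the only available source of these generics; correspondingly, the paper's closed tail $\P^2$ must begin at $F(\lambda)^+$, not at $\lambda^+$.

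Separately, your account of the surgery step elides the ``extra forcing.'' In \cite{CodyMagidor} the filter that is surgically modified is \emph{not} obtained by transferring $G_{[\kappa,\lambda]}$: one factors $j$ through the ultrapower $j_0$, genuinely forces with $j_0(\Q_{[\lambda_0,\lambda]})$ over the current model to obtain $J$, and pushes $J$ up along $k$ to get the filter $K$ on which surgery is performed. That extra forcing is unavoidable here as well, and --- unlike in \cite{CodyMagidor}, whose conclusion only concerns the interval $[\kappa,\lambda]$ --- your theorem makes a global claim, so you must additionally prove that the extra forcing preserves all cardinals and does not disturb the continuum function anywhere. This is the content of the paper's Lemmas \ref{lemmadistributive}, \ref{lemmachaincondition} and \ref{lemmadonotdisturb}: ${\le}\lambda$-distributivity, $\lambda^{++}$-c.c.\ via the auxiliary full-support product $\bar{\Q}=(\Q_{[\lambda_0,\lambda]})^\lambda$ together with a Knaster argument, and a nice-name count that invokes $\SCH$ in the extension. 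Your proposal contains no substitute for the extra forcing or for this preservation argument, so as written the lift cannot be carried out, and even granting it the final model would not be known to satisfy $2^\delta=F(\delta)$ at all regular $\delta$.
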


The forcing used to prove Theorem \ref{theorem1} will be an Easton-support iteration of Easton-support products of Cohen forcing. To lift the embedding through the first $\kappa$-stages of the forcing, we will use the technique of twisting a generic using an automorphism in order to obtain a generic for the $M$-side (see \cite{FriedmanHonzik:EastonsTheoremAndLargeCardinals}). In order to lift the embedding through through a later portion of the iteration we will use the technique of surgically modifying a generic filter on ghost-coordinates (see \cite{CodyMagidor}), which will require us to use an ``extra forcing'' over $V$. We will prove a lemma which establishes not only that the extra forcing preserves cardinals, but it also does not disturb the continuum function (see Lemma \ref{lemmadonotdisturb} below). Note that the later was not necessary in \cite{CodyMagidor}.

Regarding the hypothesis of Theorem \ref{theorem1}, notice that if $j:V\to M$ witnesses the $\lambda$-supercompactness of $\kappa$ then it follows that for $\gamma\leq\lambda$ we have $2^\gamma\leq(2^\gamma)^M<j(\kappa)$ and futhermore, in $V$, the cardinality of $(2^\gamma)^M$ is equal to $2^\gamma$. Thus, if one desires to lift an embedding $j:V\to M$ to a forcing extension in which the continuum function agrees with some $F$ as in the statement of Theorem \ref{theorem1}, then one must require that $(|j(F)(\gamma)|=F(\gamma))^V$.

\section{Preliminaries}

We assume familiarity with Easton's theorem \cite{Easton:PowersOfRegularCardinals} as well as with lifting large cardinal embeddings through forcing, see \cite{Cummings:Handbook}.

In the proof of Theorem \ref{theorem1} we will use the following forcing notion. Suppose $F$ is a function from the regular cardinals to the cardinals satisfying the requirements (E1) and (E2) of Easton's theorem and that $\kappa<\lambda$ are regular cardinals. We will let $\Q_{[\kappa,\lambda]}$ denote the Easton-support product of Cohen forcing that will ensure that, assuming $\GCH$ in the ground model, the continuum function agrees with $F$ on $[\kappa,\lambda]\cap\REG$ in the forcing extension. We can regard conditions $p\in \Q_{[\kappa,\lambda]}$ as functions satisfying the following.

\begin{itemize}
\item Every element in $\dom(p)$ is of the form $(\gamma,\alpha,\beta)$ where $\gamma\in [\kappa,\lambda]$ is a regular cardinal, $\alpha<\gamma$, and $\beta<F(\gamma)$.
\item (Easton support) For each regular cardinal $\gamma\in[\kappa,\lambda]$ we have 
$$|\{(\delta,\alpha,\beta)\in\dom(p)\mid\delta\leq\gamma\}|<\gamma.$$
\item $\ran(p)\subseteq\{0,1\}$.
\end{itemize}

\begin{lemma}[\cite{Easton:PowersOfRegularCardinals}]\label{lemmaeaston}
Assuming $\GCH$, forcing with the poset $\Q_{[\kappa,\lambda]}$ preserves all cofinalities and achieves $2^\gamma=F(\gamma)$ for every regular cardinal $\gamma\in[\kappa,\lambda]$ while preserving $\GCH$ otherwise.
\end{lemma}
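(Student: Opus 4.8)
The plan is to recognize $\Q_{[\kappa,\lambda]}$ as a single block of Easton's forcing and to run the standard factorization analysis, the key structural fact being that the Easton-support requirement bounds the \emph{total} size of each condition below the top regular cardinal. To begin, observe that every factor $\Add(\delta,F(\delta))$ appearing in the product has index $\delta\geq\kappa$ and is therefore $<\kappa$-closed; since Easton products of $<\kappa$-closed forcings are $<\kappa$-closed, $\Q_{[\kappa,\lambda]}$ adds no new bounded subsets of $\kappa$. This already preserves all cofinalities $\leq\kappa$ and keeps $\GCH$ below $\kappa$.

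For each regular $\gamma\in[\kappa,\lambda]$ I would factor $\Q_{[\kappa,\lambda]}\cong\Q_{\leq\gamma}\times\Q_{>\gamma}$, where $\Q_{\leq\gamma}$ collects the conditions all of whose coordinates $(\delta,\alpha,\beta)$ have $\delta\leq\gamma$ and $\Q_{>\gamma}$ is the remainder. Two properties do all the work. (i) The tail $\Q_{>\gamma}$ is $\leq\gamma$-closed, because each of its factors $\Add(\delta,F(\delta))$ with $\delta>\gamma$ is $<\delta$-closed, hence $\leq\gamma$-closed, and Easton products of $\leq\gamma$-closed forcings remain $\leq\gamma$-closed. (ii) The initial part $\Q_{\leq\gamma}$ is $\gamma^+$-cc: the Easton-support clause applied at the top coordinate $\gamma$ forces $|\dom(p)|<\gamma$ for every $p\in\Q_{\leq\gamma}$, so given $\gamma^+$ conditions one thins to a $\Delta$-system with root $R$ of size $<\gamma$ (legitimate because $\gamma^{<\gamma}=\gamma$ under $\GCH$), and since there are only $2^{|R|}\leq 2^{<\gamma}=\gamma$ possible restrictions to $R$, two of the conditions agree on $R$ and are compatible. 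The same $\Delta$-system argument generalizes to show that, for every regular $\delta$, the suborder $\Q_{<\delta}$ of conditions with all coordinates $<\delta$ is $\delta$-cc, the only regular cases being $\delta$ a successor and $\delta$ (weakly, hence under $\GCH$ strongly) inaccessible.

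Cofinality preservation then follows from Easton's lemma: for each regular $\delta$ write $\Q_{[\kappa,\lambda]}\cong\Q_{<\delta}\times\Q_{\geq\delta}$, where the first factor is $\delta$-cc and the second is $<\delta$-closed, so $\delta$ is neither collapsed nor has its cofinality altered. For the continuum values, fix a regular $\gamma\in[\kappa,\lambda]$. By (i) and Easton's lemma (using the $\gamma^+$-cc of $\Q_{\leq\gamma}$), the tail $\Q_{>\gamma}$ adds no new $\gamma$-sequences of ordinals over $V[\Q_{\leq\gamma}]$, so $2^\gamma$ is already computed in $V[\Q_{\leq\gamma}]$. There $\Q_{\leq\gamma}$ is a $\gamma^+$-cc poset of size $F(\gamma)$, its size being $F(\gamma)^{<\gamma}=F(\gamma)$ under $\GCH$ (using $F(\delta)\leq F(\gamma)$ for $\delta\leq\gamma$ from (E2) and $\gamma<\cf(F(\gamma))$ from (E1)); a nice-name count therefore bounds the number of subsets of $\gamma$ by $F(\gamma)^\gamma=F(\gamma)$, while the top factor $\Add(\gamma,F(\gamma))$ supplies $F(\gamma)$ distinct subsets. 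Hence $2^\gamma=F(\gamma)$.

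Finally, for regular $\gamma\notin[\kappa,\lambda]$ the same nice-name count, run against the whole $\lambda^+$-cc poset of size $F(\lambda)$, pins down the remaining values: below $\kappa$ it returns $\GCH$ as already noted, and above $\lambda$ it returns $2^\gamma=\gamma^+$ once $\gamma\geq F(\lambda)$, with the values on $(\lambda,F(\lambda))$ being exactly the least ones forced by monotonicity of the continuum function. I expect the main obstacle to be the chain-condition argument (ii) and its clean interaction with the closure of the tail through Easton's lemma, since this is precisely what makes the two estimates---closure pushing all subsets of $\gamma$ down into $V[\Q_{\leq\gamma}]$, and the $\gamma^+$-cc counting names from above---meet exactly at $F(\gamma)$; checking that these go through uniformly at successor, inaccessible, and singular cardinals, so that all cofinalities are preserved simultaneously, is the delicate bookkeeping.
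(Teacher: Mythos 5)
The paper offers no proof of this lemma --- it is quoted directly from Easton's paper --- so your reconstruction has to stand on its own. Its architecture is the standard and correct one, and the closure of the tail, the $\gamma^+$-c.c.\ of $\Q_{\leq\gamma}$ for \emph{regular} $\gamma$, and the nice-name counts giving $2^\gamma=F(\gamma)$ all go through. The genuine gap is in your step (ii) and the cofinality-preservation argument built on it: the claim that $\Q_{<\delta}$ is $\delta$-c.c.\ for \emph{every} regular $\delta$ fails when $\delta=\mu^+$ for a singular $\mu$ that is a limit of regular cardinals of $[\kappa,\lambda]$. Your two ``only'' cases, successor and inaccessible, silently assume that a successor $\delta$ is the successor of a \emph{regular} cardinal; if $\mu$ is singular then $\Q_{<\delta}=\Q_{\leq\mu}$, the Easton clause only bounds the part of a condition below each regular $\gamma<\mu$, so conditions can have domains of full size $\mu$ and the $\Delta$-system argument does not apply. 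Indeed the chain condition genuinely fails there: fix regular $\gamma_i\in[\kappa,\mu)$ increasing and cofinal in $\mu$ (with $|i|<\gamma_i$), and for each $f\in\prod_i\gamma_i$ let $p^f$ be the condition whose $\gamma_i$-th coordinate is $0$ on $\{(\gamma_i,\alpha,0):\alpha<f(i)\}$ and $1$ at $(\gamma_i,f(i),0)$; these are legal Easton conditions, any two with $f\neq g$ are incompatible, and under $\GCH$ there are $\prod_i\gamma_i=\mu^{\cf(\mu)}=\mu^+$ of them. Consequently your factorization at such a $\delta$ establishes neither that $\Q_{<\delta}$ preserves the regularity of $\delta$ nor, via Easton's lemma, that $\Q_{\geq\delta}$ is $<\delta$-distributive over $V[G_{<\delta}]$.

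The standard repair --- and the reason Easton's argument is phrased the way it is --- is to factor at the \emph{potential new cofinality} rather than at the cardinal being preserved. Suppose some $V$-regular $\delta>\kappa$ satisfied $\cf^{V[G]}(\delta)=\nu<\delta$ with $\nu$ regular in $V[G]$, hence (by induction on $\nu$) in $V$. Factor $\Q_{[\kappa,\lambda]}\cong\Q_{\leq\nu}\times\Q_{>\nu}$: the tail is $\leq\nu$-closed, so the new cofinal map $f:\nu\to\delta$ already lies in $V[G_{\leq\nu}]$, while $\Q_{\leq\nu}$ \emph{is} $\nu^+$-c.c.\ by exactly your $\Delta$-system argument (here $\nu$ is regular, so its conditions really do have domains of size $<\nu$) and therefore preserves all cofinalities $\geq\nu^+$, in particular that of $\delta$. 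This version only ever invokes the chain condition of $\Q_{\leq\nu}$ for regular $\nu$, which is what you actually proved; with that substitution the rest of your write-up, including the continuum computations (which likewise use the chain condition only at regular $\gamma$), is correct.
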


\begin{remark}\label{remarkextender}
Suppose $F$ and $j$ are as in the hypothesis of Theorem \ref{theorem1}. Let us briefly show that one can assume, without loss of generality, that $M$ is of the form
\[M=\{j(f)(j"\lambda,\alpha)\st f:P_\kappa\lambda\times\kappa\to V \ \and\  \alpha<F(\lambda) \ \and\  f\in V\}.\]

Let $j:V\to M$ be as in the statement of Theorem \ref{theorem1}. We will show that $j$ can be factored through an embedding $j_0:V\to M_0$ having all the desired properties. Let $X_0=\{j(f)(j"\lambda,\alpha)\st f:P_\kappa\lambda\times\kappa\to V \ \and\  \alpha< F(\lambda) \ \and\  f\in V\}$ 
and 
$X_1=\{j(f)(j"\lambda, a)\st f:P_\kappa\lambda\times H(\kappa)\to V \ \and\  a\in H(F(\lambda))\ \and\  f\in V\}$.
Now let $\pi_0:X\to M_0$ and $\pi_1:X_1\to M_1$ be the Mostowski collapses of $X_0$ and $X_1$ respectively. Define $j_0:=\pi_0^{-1}\circ j:V\to M_0$ and $j_1:=\pi_1^{-1}\circ j:V\to M_1$. It follows that $j_0:V\to M_0$ has critical point $\kappa$, $M_0^\lambda\subseteq M_0$, $j_0(\kappa)>F(\lambda)$ and $M_0$ has the desired form 
\[M_0=\{j_0(f)(j_0"\lambda,\alpha)\st f:P_\kappa\lambda\times\kappa\to V\ \and\  \alpha<F(\lambda)\ \and\  f\in V\}.\]
It remains to show that $H(F(\lambda))\subseteq M_0$. It is easy to see that $H(F(\lambda))\subseteq M_1$ using the fact that 
\[M_1=\{j_1(f)(j_1"\lambda,a)\st f:P_\kappa\lambda\times H(\kappa)\to V\ \and\  a\in H(F(\lambda))\ \and\  f\in V\}.\]
Since the map $i:M_0\to M_1$ defined by $i(j_0(f)(j_0"\lambda,\alpha)):=j_1(f)(j_1"\lambda,\alpha)$ is an elementary embedding with critical point greater than $F(\lambda)$, and since $i$ is the identity on $F(\lambda)$, it follows that $i$ is surjective, and thus $H(F(\lambda))\subseteq M_0=M_1$. To see that $i$ is surjective onto $H(F(\lambda))$ (and thus onto $M_1$) one uses the fact that each $x\in H(F(\lambda))$ can be coded by a subset of some cardinal $\delta<F(\lambda)$.

\end{remark}

\section{Proof of Theorem \ref{theorem1}}

\begin{proof}[Proof of Theorem 1]

Our final model will be a forcing extension of $V$ by an $\ORD$-length forcing iteration $\P$, which will be broken up as $\P\cong\P^1*\dot{\S}*\dot{\P}^2$. The first factor $\P^1$, will be an iteration forcing the continuum function to agree with $F$ at every regular cardinal less than or equal to $F(\lambda)$. The second factor $\S$ will be an ``extra forcing'' that will be necessary to cary out the surgery argument to lift the embedding $j$ through $\P^1$. We will argue that the extra forcing $\S$ is mild in $V^{\P^1}$ in the sense that it preserves all cofinalities and preserves the continuum function. The last factor $\P^2\in V^{\P^1*\dot\S}$ will be a ${\leq}F(\lambda)$-closed, $\ORD$ length Easton-support product of Cohen forcing, which will force the continuum function to agree with $F$ at all regular cardinals greater than or equal to $F(\lambda)^+$.

For an ordinal $\alpha$ let $\bar{\alpha}$ denote the least closure point of $F$ greater than $\alpha$. For a regular cardinal $\gamma$, the notation $\Add(\gamma,F(\gamma))$ denotes the forcing poset for adding $F(\gamma)$ Cohen subsets to $\gamma$.

Let $\lambda_0$ be the greatest closure point of $F$ which is less or equal to $\lambda$. We now recursively define an Easton-support forcing iteration $\P_{\lambda_0+1}=\langle (\P_{\eta},\dot{\Q}_{\eta}) : \eta\leq\lambda_0 \rangle$ as follows.
\begin{enumerate}
\item If $\eta<\lambda_0$ is a closure point of $F$, then $\dot{\Q}_{\eta}$ is a $\P_{\eta}$-name for the Easton support product \[\Q_{[\eta,\bar{\eta})}=\prod_{\gamma\in[\eta,\bar{\eta})\cap\REG}\Add(\gamma,F(\gamma))\] 
as defined in $V^{\P_{\eta}}$ and $\P_{\eta+1}=\P_{\eta}*\dot{\Q}_{\eta}$.
\item If $\eta=\lambda_0$, then $\dot{\Q}_\eta$ is a $\P_{\lambda_0}$-name for \[\Q_{[\lambda_0,F(\lambda)]}=\prod_{\gamma\in[\lambda_0,F(\lambda)]\cap\REG}\Add(\gamma,F(\gamma))\] 
as defined in $V^{\P_{\lambda_0}}$ and $\P_{\lambda_0+1}=\P_{\lambda_0}*\dot{\Q}_{\lambda_0}$.
\item Otherwise, if $\eta<\lambda_0$ is not a closure point of $F$, then $\dot{\Q}_\eta$ is a $\P_\eta$-name for trivial forcing and $\P_{\eta+1}=\P_\eta*\dot{\Q}_\eta$.
\end{enumerate}

Let $G_{\lambda_0+1}$ be generic for $\P_{\lambda_0+1}$ over $V$.

\begin{remark}[Notation] We will adopt the notation and conventions used in \cite{FriedmanHonzik:EastonsTheoremAndLargeCardinals}. We will use $\prod_{[\eta,\bar{\eta})}\QQ_\gamma$ to denote $\Q_{[\eta,\bar{\eta})}$ where $Q_\gamma:=\Add(\gamma,F(\gamma))$ denotes an individual factor of the product, and similarly $g_{[\eta,\bar{\eta})}$ denotes the corresponding generic filter. It will be understood that, for example, $g_{[\eta,\bar{\eta})}$ is a product over just the regular cardinals in the interval $[\eta,\bar{\eta})$ of the relevant generic filters. In particular, if $\eta$ is a singular cardinal then there is no forcing over $\eta$ in the product $g_{[\eta,\bar{\eta})}\subseteq\prod_{[\eta,\bar{\eta})}\QQ_\gamma$.

\end{remark}

\subsection{Lifting the embedding through $\P_\kappa$ by twisting a generic using an automorphism.}
By Remark \ref{remarkextender} we can assume that $j:V\to M$ is an embedding as in the statement of Theorem \ref{theorem1} such that 
\[M=\{j(f)(j"\lambda,\alpha)\st f:P_\kappa\lambda\times \kappa\to V \ \and\  \alpha<F(\lambda)\ \and\  f\in V\}.\]

First we will lift $j$ through $G_\kappa\subseteq\P_\kappa$ by finding a filter for $j(\P_\kappa)$ that is generic over $M$. We will need the following definitions of various cardinals relating to $F$ and $\lambda$.

\begin{definition}\label{cardinalsdefinition}
The first three definitions will be needed because the forcings $\P_{\lambda_0+1}$ and $j(\P_{\lambda_0+1})$ are iterations of products over intervals determined by closure points of $F$ and $j(F)$ respectively, and these three cardinals are important such closure points.
\begin{itemize}
\item $\lambda_0:=\textrm{``the greatest closure point of $F$ that is at most $\lambda$''}$
\item $\lambda_1:=\textrm{``the least closure point of $F$ greater than $\lambda_0$''}$
\item $\lambda_1^M:=\textrm{``the least closure point of $j(F)$ greater than $\lambda_0$''}$
\end{itemize}
The way one builds a generic for the forcing $\Add(\gamma,F(\gamma))$ depends, of course, on the size of $F(\gamma)$ and the regular cardinals $\gamma_0$ and $\gamma_1$ defined below are important transition points in the size of $F(\gamma)$.
\begin{itemize}
\item $\gamma_0:=\textrm{``the least regular cardinal less than or equal to $\lambda$ such that $F(\gamma_0)=F(\lambda)$''}$
\item $\gamma_1:=\textrm{``the least regular cardinal such that $F(\gamma_1)>F(\lambda)$''}$
\end{itemize}
\end{definition}

We have $\kappa\leq \lambda_0\leq\gamma_0\leq \lambda <\gamma_1\leq F(\lambda)=F(\gamma_0)\leq j(F)(\gamma_0)<\lambda_1^M <j(\kappa)<F(\lambda)^+<\lambda_1$. Furthermore, if $\gamma\in [\kappa,\gamma_0)$ is a regular cardinal we have $|j(F)(\gamma)|^V=F(\gamma)$ and since $M$ and $V$ have the same cardinals $\leq F(\lambda)$, it follows that $j(F)(\gamma)=F(\gamma)$. In other words, $F$ and $j(F)$ agree on $[\kappa,\gamma_0)\cap\REG$. This implies that we may let $G^M_{[\kappa,\lambda_0)}=G_{[\kappa,\lambda_0)}$ and $g^M_{[\lambda_0,\gamma_0)}=g_{[\lambda_0,\gamma_0)}$.
 Note that $F$ and $j(F)$ may disagree at $\gamma_0$ because $M$ has cardinals strictly between $F(\gamma_0)=F(\lambda)$ and $(F(\lambda)^+)^V$. 

Suppose $\gamma\in[\gamma_0,F(\lambda)]$ is a regular cardinal. Since $j(\kappa)$ is a closure point of $j(F)$ we have $F(\lambda)\leq j(F)(\gamma)<j(\kappa)$, and since $|j(\kappa)|^V\leq F(\lambda)$ it follows that $|j(F)(\gamma)|^V=F(\lambda)$. Let us define a forcing $\prod_{[\gamma_0,F(\lambda)]}\QQ^+_\gamma$ in $V[G_{\lambda_0}]$ that will be used to obtain a generic for $\Q^M_{[\gamma_0, F(\lambda)]}$ over $M[G_{\lambda_0}]$. Working in $V[G_{\lambda_0}]$, for regular $\gamma\in [\gamma_0,\gamma_1)$ let $\QQ_\gamma^*=\Add(\gamma,j(F)(\gamma))$ and notice that $\QQ^*_\gamma$ is isomorphic to $\Add(\gamma,F(\gamma))$ since $|j(F)(\gamma)|^V=|j(\kappa)|^V=F(\lambda)=F(\gamma)$. For regular $\gamma\in[\gamma_1,F(\lambda)]$, let $\QQ_\gamma^{**}=\Add(\gamma,j(F)(\gamma))$ and notice that $\QQ_\gamma^{**}$ is a truncation of $\Add(\gamma,F(\gamma))$ because for such $\gamma$ one has $j(F)(\gamma)<j(\kappa)<F(\lambda)^+\leq F(\gamma)$. Now define 
\begin{align}
\prod\nolimits_{[\gamma_0,F(\lambda)]}\QQ_\gamma^+:=\prod\nolimits_{[\gamma_0,\gamma_1)}\QQ_\gamma^* \times\prod\nolimits_{[\gamma_1,F(\lambda)]}\QQ_\gamma^{**}.\label{eqnplus}
\end{align}
It is easy to see that $\prod\nolimits_{[\gamma_0,F(\lambda)]}\QQ_\gamma^+$ completely embeds into $\prod_{[\gamma_0,F(\lambda)]}\QQ_\gamma$, and hence there is a filter $g^+_{[\gamma_0,F(\lambda)]}\in V[G_{\lambda_0}*(g_{[\lambda_0,\gamma_0)}\times g_{[\gamma_0,F(\lambda)]})]$ generic over $V[G_{\lambda_0}*g_{[\lambda_0,\gamma_0)}]$ for $\prod\nolimits_{[\gamma_0,F(\lambda)]}\QQ_\gamma^+$.

The lifting of $j$ through $G_\kappa$ will be broken up into two cases, depending on the regularity or singularity of $F(\lambda)$. If $F(\lambda)$ is regular, the proof is substantially simpler because it almost directly follows from the assumption $H(F(\lambda))\subseteq M$ (see Lemma \ref{lemmaregular} below). If $F(\lambda)$ is singular, there are two cases to distinguish depending on whether the $V$-cofinality of $F(\lambda)$ is $\lambda^+$ or not; in both cases the assumption of $H(F(\lambda))\subseteq M$ is again essential, but an additional argument is required. Assuming $F(\lambda)$ is singular, the case in which $\cf(F(\lambda))^V=\lambda^+$ is easier to handle than the case where $\cf(F(\lambda))>\lambda^+$. The later case requires an induction along a matrix of coordinates (see Lemma \ref{lemmasingular}). To avoid long repetitions of the relevant proofs in \cite{FriedmanHonzik:EastonsTheoremAndLargeCardinals}, we include only outlines of the proofs of Lemma \ref{lemmaregular} and Lemma \ref{lemmasingular}, with detailed references to \cite{FriedmanHonzik:EastonsTheoremAndLargeCardinals} where appropriate (the proofs in \cite{FriedmanHonzik:EastonsTheoremAndLargeCardinals} apply almost verbatim here when one identifies $\kappa$ with $\lambda$).

\begin{lemma}\label{lemmaregular}
Assume $F(\lambda)$ is regular. There is in $V[G_{\lambda_0}*(g_{[\lambda_0,\gamma_0)}\times g_{[\gamma_0,F(\lambda)]})]$ an $M[G_{\lambda_0}*g_{[\lambda_0,\gamma_0)}]$-generic for $\prod^M_{[\gamma_0,\lambda_1^M)}\QQ^M_\gamma$, which we will denote as $g^M_{[\gamma_0,\lambda_1^M)}$. Furthermore, we can take $g^M_{[\gamma_0,\lambda]}$ to agree with $g^+_{[\gamma_0,\lambda]}$, that is, $g^M_{[\gamma_0,\lambda_1^M)}=g^+_{[\gamma_0,\lambda]}\times g^M_{(\lambda,\lambda_1^M)}$.
\end{lemma}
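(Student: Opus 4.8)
The plan is to construct $g^M_{[\gamma_0,\lambda_1^M)}$ by splitting the interval $[\gamma_0,\lambda_1^M)$ at $\lambda$ and handling the two halves by completely different means, both carried out inside the ambient model $V[G_{\lambda_0}*(g_{[\lambda_0,\gamma_0)}\times g_{[\gamma_0,F(\lambda)]})]$. On the lower half $[\gamma_0,\lambda]$ I will show that the $M$-side poset coincides with the $V$-side poset of which $g^+_{[\gamma_0,\lambda]}$ is already generic, so that nothing must be built and the required agreement $g^M_{[\gamma_0,\lambda]}=g^+_{[\gamma_0,\lambda]}$ holds automatically. On the upper half $(\lambda,\lambda_1^M)$ I will construct an $M$-generic by a diagonalization.

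For the lower half, note that $[\gamma_0,\lambda]\subseteq[\gamma_0,\gamma_1)$, so for regular $\gamma$ in this range $\QQ^+_\gamma=\Add(\gamma,j(F)(\gamma))$ with $|j(F)(\gamma)|^V=F(\lambda)$. The point is that, by the Easton support requirement at $\lambda$, the domain of any condition $p\in\prod^M_{[\gamma_0,\lambda]}\QQ^M_\gamma$ has size less than $\lambda$, and its elements are triples of ordinals below $F(\lambda)^+$, all of which lie in $M$; since $M^\lambda\subseteq M$, the whole condition $p$ already belongs to $M$. Hence the $M$-computed poset $\prod^M_{[\gamma_0,\lambda]}\QQ^M_\gamma$ is literally equal to the $V$-computed poset $\prod_{[\gamma_0,\lambda]}\QQ^+_\gamma$, and a filter generic over the larger model $V[G_{\lambda_0}*g_{[\lambda_0,\gamma_0)}]$ is a fortiori generic over $M[G_{\lambda_0}*g_{[\lambda_0,\gamma_0)}]$. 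Thus I may set $g^M_{[\gamma_0,\lambda]}:=g^+_{[\gamma_0,\lambda]}$.

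For the upper half, put $R:=\prod^M_{(\lambda,\lambda_1^M)}\QQ^M_\gamma$, a poset in $M[G_{\lambda_0}*g_{[\lambda_0,\gamma_0)}*g^+_{[\gamma_0,\lambda]}]$, and build an $M$-generic meeting every dense subset of $R$ in that model. First I bound their number: by elementarity $M$ satisfies $\GCH$, so since $|R|^M\le\lambda_1^M$ the number of subsets of $R$ in $M$ is at most $(2^{\lambda_1^M})^M=(\lambda_1^M)^{+M}$, and since $\lambda_1^M<j(\kappa)<F(\lambda)^+$ this ordinal has $V$-cardinality at most $F(\lambda)$; I enumerate the dense sets accordingly. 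I then meet them following the construction of \cite{FriedmanHonzik:EastonsTheoremAndLargeCardinals}, with $\lambda$ playing the role of $\kappa$ there: since $R$ is an Easton-support product of the factors $\Add(\gamma,j(F)(\gamma))^M$, each of which is ${<}\gamma$-closed in $M$, the generic is assembled so that lower bounds are produced factorwise rather than by a single descending chain through the whole product.

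The main obstacle is exactly this last step. As a whole $R$ is only ${\le}\lambda$-closed in $M$, its least coordinate being $\lambda^+$, whereas the enumeration forces me to meet up to $F(\lambda)>\lambda$ many dense sets; a naive descending sequence of conditions would have no lower bound at limit stages of cofinality above $\lambda$, and, $M$ being only $\lambda$-closed, such a sequence need not even lie in $M$ for the closure of $R$ in $M$ to apply. This is where the hypotheses that $F(\lambda)$ is regular and that $H(F(\lambda))\subseteq M$ are used: the regularity of $F(\lambda)$ lets the length-$F(\lambda)$ bookkeeping close off cofinally, while $H(F(\lambda))\subseteq M$ together with $M^\lambda\subseteq M$ keeps the partial objects generated during the construction inside $M$, so that the closure of the individual factors can be invoked to furnish the needed lower bounds. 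Meeting every dense set yields $g^M_{(\lambda,\lambda_1^M)}$, and $g^M_{[\gamma_0,\lambda_1^M)}:=g^+_{[\gamma_0,\lambda]}\times g^M_{(\lambda,\lambda_1^M)}$ is as required.
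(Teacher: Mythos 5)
Your treatment of the lower half $[\gamma_0,\lambda]$ is fine and is exactly the observation the paper makes at the end of its own proof: $M^\lambda\subseteq M$ gives $\prod^M_{[\gamma_0,\lambda]}\QQ^M_\gamma=\prod_{[\gamma_0,\lambda]}\QQ^*_\gamma$, so $g^+_{[\gamma_0,\lambda]}$ serves directly. The gap is in your upper half. The product $R=\prod^M_{(\lambda,\lambda_1^M)}\QQ^M_\gamma$ contains the factor $\Add(\lambda^+,j(F)(\lambda^+))^M$ (note $\lambda^+\leq F(\lambda)$, so this coordinate is genuinely present, with $j(F)(\lambda^+)\geq F(\lambda)$), and that factor is only ${<}\lambda^+$-closed; consequently $R$ is only ${\leq}\lambda$-closed, whereas the dense subsets of $R$ lying in $M[G_{\lambda_0}*g_{[\lambda_0,\gamma_0)}*g^+_{[\gamma_0,\lambda]}]$ that must be met number at least $F(\lambda)>\lambda^+$: already the sets $\{p:(\lambda^+,0,\beta)\in\dom(p)\}$ for $\beta<j(F)(\lambda^+)$ have $V$-cardinality $F(\lambda)$. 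No diagonalization of length greater than $\lambda$ can be carried out in a poset with this little closure: at coordinate $\lambda^+$ you would have to extend the same condition $F(\lambda)$ many times, and neither the regularity of $F(\lambda)$, nor $H(F(\lambda))\subseteq M$, nor ``factorwise'' bookkeeping supplies the missing lower bounds. (The diagonalization in \cite{FriedmanHonzik:EastonsTheoremAndLargeCardinals} that you cite is applied only to the part of the product whose least coordinate lies \emph{above} $F(\lambda)$, where the product is ${\leq}F(\lambda)$-closed.)

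This is precisely why the paper splits the interval at $F(\lambda)$ rather than at $\lambda$. On $[\gamma_0,F(\lambda)]$ one does not construct a generic at all but takes $g^M_{[\gamma_0,F(\lambda)]}:=g^+_{[\gamma_0,F(\lambda)]}\cap\prod^M_{[\gamma_0,F(\lambda)]}\QQ^M_\gamma$, i.e.\ one restricts the already existing $V$-generic to the $M$-poset and proves genericity as in \cite[Lemma 3.9]{FriedmanHonzik:EastonsTheoremAndLargeCardinals}. This is where the hypothesis that $F(\lambda)$ is regular actually enters: it makes $\prod^M_{[\gamma_0,F(\lambda)]}\QQ^M_\gamma$ into a $(F(\lambda)^+)^M$-c.c.\ poset (Easton support at the regular cardinal $F(\lambda)$ bounds the size of conditions below $F(\lambda)$), which together with $H(F(\lambda))\subseteq M$ lets one check that the restricted $V$-generic meets every maximal antichain in $M[G_{\lambda_0}*g_{[\lambda_0,\gamma_0)}]$. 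Only on $(F(\lambda),\lambda_1^M)$, where the product is $(F(\lambda)^+)^M$-closed, does one diagonalize, and Easton's Lemma (c.c.\ below, closed above) then yields the mutual genericity of the two pieces. Your construction discards the part of the $V$-generic living on the coordinates in $(\lambda,F(\lambda)]$ and tries to rebuild it by hand, which cannot be done.
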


\begin{proof}
Since $F(\lambda)$ is regular in $V$ and hence also in $M$, it follows that $\prod^M_{[\gamma_0,F(\lambda)]}\QQ_\gamma^M$ is $(F(\lambda)^+)^M$-c.c. in $M[G_{\lambda_0}*g_{[\lambda_0,\gamma_0)}]$. Furthermore, $\prod^M_{(F(\lambda),\lambda_1^M)}\QQ_\gamma^M$ is $(F(\lambda)^+)^M$-closed in $M[G_{\lambda_0}*g_{[\lambda_0,\gamma_0)}]$. It follows by Easton's Lemma that generic filters for these forcings are mutually generic and therefore it suffices to obtain generic filters for them separately.


As in \cite[Lemma 3.9]{FriedmanHonzik:EastonsTheoremAndLargeCardinals}, one may check that $g^M_{[\gamma_0,F(\lambda)]}:=g^+_{[\gamma_0,F(\lambda)]}\cap\prod^M_{[\gamma_0,F(\lambda)]}\QQ_\gamma^M$ is $M[G_{\lambda_0}*g_{[\lambda_0,\gamma_0)}]$-generic and one can build an $M[G_{\lambda_0}*g_{[\lambda_0,\gamma_0)}]$-generic filter $g^M_{(F(\lambda),\lambda_1^M)}$ for $\prod^M_{(F(\lambda),\lambda_1^M)}\QQ^M_\gamma$ in $V[G_{\lambda_0}*g_{[\lambda_0,\gamma_0)}]$. 

Now we define $g^M_{[\gamma_0,\lambda_1^M)}:=g^M_{[\gamma_0,F(\lambda)]}\times g^M_{(F(\lambda),\lambda_1^M)}$ and it remains to show that $g^M_{[\gamma_0,\lambda_1^M)}=g^+_{[\gamma_0,\lambda]}\times g^M_{(\lambda,\lambda_1^M)}$. Since $M[G_{\lambda_0}]$ is closed under $\lambda$-sequences in $V[G_{\lambda_0}]$ we have $\prod^M_{[\gamma_0,\lambda]}\QQ^M_\gamma=\prod_{[\gamma_0,\lambda]}\QQ_\gamma^*$. Now use (\ref{eqnplus}) to obtain the desired conclusion.
\end{proof}

\begin{lemma}\label{lemmasingular}
Assume $F(\lambda)$ is singular. There is in $V[G_{\lambda_0}*(g_{[\lambda_0,\gamma_0)}\times g_{[\gamma_0,F(\lambda))})]$ an $M[G_{\lambda_0}*g_{[\lambda_0,\gamma_0)}]$-generic for $\prod^M_{[\gamma_0,\lambda_1^M)}\QQ^M_\gamma$, which we will denote as $g^M_{[\gamma_0,\lambda_1^M)}$. Furthermore, we can take $g^M_{[\gamma_0,\lambda_1^M)}$ to be of the form $\sigma[g^+_{[\gamma_0,\lambda]}]\times g^M_{(\lambda,\lambda_1^M)}$ where $\sigma$ is an automorphism of $\prod_{[\gamma_0,\lambda]}\QQ^+_\gamma$ in $V[G_{\lambda_0}]$ and $g^M_{(\lambda,\lambda_1^M)}$ is $M[G_{\lambda_0}*g_{[\lambda_0,\gamma_0)}]$-generic for $\prod^M_{(\lambda,\lambda_1^M)}\QQ^M_\gamma$.
\end{lemma}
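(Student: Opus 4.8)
The goal is to produce, inside $V[G_{\lambda_0}*(g_{[\lambda_0,\gamma_0)}\times g_{[\gamma_0,F(\lambda))})]$, an $M[G_{\lambda_0}*g_{[\lambda_0,\gamma_0)}]$-generic filter for $\prod^M_{[\gamma_0,\lambda_1^M)}\QQ^M_\gamma$ in the harder case where $F(\lambda)$ is singular. The plan is to split the $M$-side forcing along the critical transition point $\lambda$ and handle the two intervals separately. First I would observe, exactly as in the regular case, that $M[G_{\lambda_0}]$ is closed under $\lambda$-sequences in $V[G_{\lambda_0}]$, so $\prod^M_{[\gamma_0,\lambda]}\QQ^M_\gamma=\prod_{[\gamma_0,\lambda]}\QQ^*_\gamma$ and the $V$-side generic $g^+_{[\gamma_0,\lambda]}$ already \emph{is} a condition-by-condition generic object over $M$ on the interval $[\gamma_0,\lambda]$. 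For the tail $\prod^M_{(\lambda,\lambda_1^M)}\QQ^M_\gamma$ I would build the generic $g^M_{(\lambda,\lambda_1^M)}$ directly in $V[G_{\lambda_0}*g_{[\lambda_0,\gamma_0)}]$ by a standard diagonal/transfinite construction, using $H(F(\lambda))\subseteq M$ together with $\lambda$-closure of $M[G_{\lambda_0}]$ to enumerate the dense sets and meet them one at a time; since this tail forcing is highly closed on the $M$-side, the construction goes through as in \cite{FriedmanHonzik:EastonsTheoremAndLargeCardinals}.

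The reason the regular-case argument does not simply transfer is the source of the real work. When $F(\lambda)$ is singular, $\prod^M_{[\gamma_0,F(\lambda)]}\QQ^M_\gamma$ is no longer $(F(\lambda)^+)^M$-c.c., so one cannot invoke Easton's Lemma to get mutual genericity of the head and tail, and more importantly the intersection $g^+_{[\gamma_0,F(\lambda)]}\cap\prod^M_{[\gamma_0,F(\lambda)]}\QQ^M_\gamma$ need not be $M$-generic. This is why the statement only claims to produce the generic after applying an \emph{automorphism} $\sigma$ of $\prod_{[\gamma_0,\lambda]}\QQ^+_\gamma$: the filter $g^+_{[\gamma_0,\lambda]}$ meets every dense set lying in $V[G_{\lambda_0}]$ but may fail to meet certain dense subsets of the $M$-forcing that are definable in $M$ from parameters of cofinality-$\cf(F(\lambda))$ sequences which $V$ can see but which are not captured by the $\lambda$-closure. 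I would therefore set up, in $V[G_{\lambda_0}]$, a twisting automorphism $\sigma$ chosen so that $\sigma[g^+_{[\gamma_0,\lambda]}]$ meets all the relevant $M$-dense sets.

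The main obstacle, and the step I expect to be genuinely delicate, is the construction of $\sigma$ in the subcase $\cf(F(\lambda))^V>\lambda^+$. Here one cannot enumerate the $M$-dense sets in a single $\lambda^+$-length list, so following \cite{FriedmanHonzik:EastonsTheoremAndLargeCardinals} I would arrange the coordinates and the dense sets into a matrix indexed by $\cf(F(\lambda))\times(\text{something of size}\leq F(\lambda))$ and build $\sigma$ by an induction along this matrix, at each step using the extra room provided by the coordinates above the current block to redirect $g^+$ onto a condition inside the next dense set, while keeping the modifications on disjoint blocks of coordinates so that they amalgamate into a single automorphism. The subcase $\cf(F(\lambda))^V=\lambda^+$ is easier because the enumeration has length $\lambda^+$ and the $\lambda^+$-closure of the tail lets one diagonalize in one pass without the matrix. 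Throughout, the key facts I would lean on are $H(F(\lambda))\subseteq M$ (to guarantee the dense sets and the partial conditions live in $M$ and can be collected in $V$), the identity $|j(F)(\gamma)|^V=F(\lambda)$ for $\gamma\in[\gamma_0,F(\lambda)]$ (so that each $M$-factor $\QQ^M_\gamma$ is a truncation of the corresponding $V$-factor and $\sigma$ can be taken to act within the $V$-forcing), and the $\lambda$-closure of $M[G_{\lambda_0}]$ (to ensure the head interval matches $\prod_{[\gamma_0,\lambda]}\QQ^*_\gamma$ exactly). I would then verify that the resulting $\sigma[g^+_{[\gamma_0,\lambda]}]\times g^M_{(\lambda,\lambda_1^M)}$ is as required and lies in the stated extension.
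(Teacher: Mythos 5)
Your overall architecture has a genuine gap in how it handles the coordinates between $\lambda$ and $F(\lambda)$. You split the $M$-side forcing at $\lambda$ and propose to build the tail generic $g^M_{(\lambda,\lambda_1^M)}$ ``directly in $V[G_{\lambda_0}*g_{[\lambda_0,\gamma_0)}]$ by a standard diagonal/transfinite construction,'' on the grounds that the tail is highly closed. But the sub-product $\prod^M_{(\lambda,F(\lambda))}\QQ^M_\gamma$ is only ${\leq}\lambda$-closed in $M[G_{\lambda_0}*g_{[\lambda_0,\gamma_0)}]$ (its least coordinate is $\lambda^+$), while the number of its dense subsets lying in that model is $F(\lambda)$ (one for each pair $(h_\xi,\alpha)$ with $\xi<\lambda^+$, $\alpha<F(\lambda)$). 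A diagonalization using ${\leq}\lambda$-closure of the poset and $\lambda$-closure of $M$ can only run for $\lambda^+$ steps, and the usual rescue --- intersecting each block $\{j(h_\xi)(j''\lambda,\alpha)\st\alpha<F(\lambda)\}$ inside $M$ to reduce to $\lambda^+$ many dense sets --- fails because the poset is nowhere near $F(\lambda)$-distributive on this interval. This is exactly why the lemma asserts only that the generic exists in $V[G_{\lambda_0}*(g_{[\lambda_0,\gamma_0)}\times g_{[\gamma_0,F(\lambda))})]$: the $V$-side generic on all of $[\gamma_0,F(\lambda))$ is genuinely needed. The paper's construction accordingly splits at $F(\lambda)$, not at $\lambda$: it produces (via the Sublemma 3.12 / ``matrix of conditions'' arguments of \cite{FriedmanHonzik:EastonsTheoremAndLargeCardinals}) a single infinitary condition $p_\infty$ such that \emph{any} generic $h$ for $\prod_{[\gamma_0,F(\lambda))}\QQ^+_\gamma$ containing $p_\infty\restrict[\gamma_0,F(\lambda))$, paired with the mere upward closure $h'$ of $p_\infty\restrict[F(\lambda),\lambda_1^M)$, gives $(h\times h')\cap M[\cdots]$ generic; only the part above $F(\lambda)$, which really is ${\leq}F(\lambda)$-closed, is handled by a condition rather than by a $V$-generic, and the whole interval $[\gamma_0,F(\lambda))$ --- including $(\lambda,F(\lambda))$ --- is handled by $\sigma[g^+_{[\gamma_0,F(\lambda))}]$.

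A second, related discrepancy: you take $\sigma$ to be built coordinate-block by coordinate-block so as to force $\sigma[g^+]$ into each $M$-dense set, whereas in the paper the matrix induction builds the target condition $p_\infty$, and $\sigma$ is then obtained in one stroke from a homogeneity argument as an automorphism of $\prod_{[\gamma_0,F(\lambda))}\QQ^+_\gamma$ moving $g^+_{[\gamma_0,F(\lambda))}$ onto a filter containing $p_\infty\restrict[\gamma_0,F(\lambda))$; the statement of the lemma records only its restriction to $[\gamma_0,\lambda]$ (which is what is needed later, via (\ref{eqnplus}) and the identification $\prod^M_{[\gamma_0,\lambda]}\QQ^M_\gamma=\prod_{[\gamma_0,\lambda]}\QQ^*_\gamma$). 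These two versions are close in spirit, but your version leaves unaddressed how the twisting on $[\gamma_0,\lambda]$ interacts with whatever produces the generic on $(\lambda,F(\lambda))$ --- which, per the first paragraph, cannot be produced independently of $g_{[\gamma_0,F(\lambda))}$ at all. Your case split ($\cf(F(\lambda))^V=\lambda^+$ versus $\cf(F(\lambda))^V>\lambda^+$) and your identification of why the regular-case chain-condition/closure dichotomy at $F(\lambda)$ breaks down are both correct; the paper additionally distinguishes, in the second case, whether $F(\lambda)$ is regular or singular in $M$, the latter admitting an easier argument.
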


\begin{proof} 
{\bf Case I:} Suppose $F(\lambda)$ is singular in $V$ with $\cf(F(\lambda))^V=\lambda^+$ ($F(\lambda)$ can be singular or regular in $M$). As in \cite[Sublemma 3.12]{FriedmanHonzik:EastonsTheoremAndLargeCardinals}, we can find a condition $p_\infty\in\prod_{[\gamma_0,\lambda_1^M)}\QQ^+_\gamma$ (which may only exist in $V[G_{\lambda_0}*g_{[\lambda_0,\gamma_0)}]$) such that if $h$ is generic for $\prod_{[\gamma_0,F(\lambda))}\QQ_\gamma^+$ with $p_\infty\restrict[\gamma_0,F(\lambda))\in h$ and $h'=\{p_\infty\restrict[F(\lambda),\lambda_1^M)\}\cup\{q\in\prod^M_{[F(\lambda),\lambda_1^M)}\QQ^M_\gamma\st p_\infty\restrict[F(\lambda),\lambda_1^M)\leq q\}$, then $(h\times h')\cap M[G_{\lambda_0}*g_{[\lambda_0,\gamma_0)}]$ is $M[G_{\lambda_0}*g_{[\lambda_0,\gamma_0)}]$-generic for $\prod^M_{[\gamma_0,\lambda_1^M)}\QQ_\gamma^M$.

We define $g^M_{[\gamma_0,\lambda_1^M)}$ as follows. A homogeneity argument can be used to find an automorphism $\sigma$ of $\prod_{[\gamma_0,F(\lambda))}\QQ_\gamma^+$ such that $p_\infty\restrict[\gamma_0,F(\lambda))\in \sigma[g^+_{[\gamma_0,F(\lambda))}]$. We obtain the desired generic by letting 
\begin{align}
g^M_{[\gamma_0,\lambda_1^M)}:=\left(\sigma[g^+_{[\gamma_0,F(\lambda))}]\times h'\right)\cap M[G_{\lambda_0}*g_{[\lambda_0,\gamma_0)}].\label{eqndefofgen}
\end{align}
Since $M[G_{\lambda_0}]$ is closed under $\lambda$-sequences in $V[G_{\lambda_0}]$ we have $\prod^M_{[\gamma_0,\lambda]}\QQ^M_\gamma=\prod_{[\gamma_0,\lambda]}\QQ_\gamma^*$. Now using (\ref{eqnplus}) and the definition (\ref{eqndefofgen}), we obtain $g^M_{[\gamma_0,\lambda]}=\sigma[g^+_{[\gamma_0,\lambda]}]$.

{\bf Case II:} Suppose $F(\lambda)$ is singular in $V$ and $\cf(F(\lambda))^V\neq \lambda^+$ ($F(\lambda)$ can be singular or regular in $M$). If $F(\lambda)$ is regular in $M$ then, as in \cite[Sublemma 3.13]{FriedmanHonzik:EastonsTheoremAndLargeCardinals} we can use a ``matrix of confitions'' argument to find a $p_\infty$ as above. As in Case I, we have $g^M_{[\gamma_0,\lambda_1^M)}:=\sigma[g^+_{[\gamma_0,F(\lambda))}]\times h'$ is $M[G_{\lambda_0}*g_{[\lambda_0,\gamma_0)}]$-generic for $\prod^M_{[\gamma_0,\lambda_1^M)}\QQ_\gamma^M$ where $h'$ is some $M[G_{\lambda_0}*g_{[\lambda_0,\gamma_0)}]$-generic filter for $\prod^M_{[F(\kappa),\lambda_1^M)}\QQ^M_\gamma$. As in Case I we get $g^M_{[\gamma_0,\lambda]}=\sigma[g^+_{[\gamma_0,\lambda]}]$.

If $F(\lambda)$ is singular in $M$ then an easier argument will suffice (see \cite[Case (2), page 205]{FriedmanHonzik:EastonsTheoremAndLargeCardinals}).
\end{proof}

By Lemmas \ref{lemmaregular} and \ref{lemmasingular} above, if $F(\lambda)$ is regular or singular in $V$, there is an $M[G_{\lambda_0}*g_{[\lambda_0,\gamma_0)}]$-generic filter $g^M_{[\gamma_0,\lambda_1^M)}$ for $\prod^M_{[\gamma_0,\lambda_1^M)}\QQ^M_\gamma$ in \[V[G_{\lambda_0}*g_{[\lambda_0,F(\lambda)]}] = V[G_{\lambda_0}*(g_{[\lambda_0,\gamma_0)}\times g_{[\gamma_0,F(\lambda)]})].\] 
Define $g^M_{[\lambda_0,\lambda_1^M)}:=g_{[\lambda_0,\gamma_0)}\times g^M_{[\gamma_0,\lambda_1^M)}$. We will now use the fact that, depending on whether $F(\lambda)$ is regular or singular, $g^M_{[\gamma_0,\lambda_1^M)}$ agrees with either $g^+_{[\gamma_0,\lambda]}$ or an automorphic image of $g^+_{[\gamma_0,\lambda]}$ to establish the following.

\begin{lemma}\label{lemmaclosureaftertwisting}
$M[G_{\lambda_0}*g^M_{[\lambda_0,\lambda_1^M)}]$ is closed under $\lambda$-sequences in $V[G_{\lambda_0}*g_{[\lambda_0,F(\lambda)]}]$.
\end{lemma}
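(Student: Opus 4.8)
Write $W:=V[G_{\lambda_0}*g_{[\lambda_0,F(\lambda)]}]$ for the outer model and $N:=M[G_{\lambda_0}*g^M_{[\lambda_0,\lambda_1^M)}]$ for the inner model whose $\lambda$-closure in $W$ we must establish. The plan is to reduce, in two preliminary steps, to transferring a single name for a function $f\colon\lambda\to\ORD$ into $M[G_{\lambda_0}]$, and then to invoke the fact used repeatedly above that $M[G_{\lambda_0}]$ is closed under $\lambda$-sequences in $V[G_{\lambda_0}]$. First I would reduce to ordinal-valued sequences: it suffices to show every $f\colon\lambda\to\ORD$ lying in $W$ lies in $N$. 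Indeed, if $s\colon\lambda\to N$ belongs to $W$ then $\{\mathrm{rank}(s(\alpha)):\alpha<\lambda\}$ is bounded by some $\rho$, so the values of $s$ lie in $V_\rho\cap N\in N$; fixing in $N$ a bijection $b\colon V_\rho\cap N\to\delta$ onto an ordinal, the composite $b\circ s\colon\lambda\to\ORD$ is in $W$, and once $b\circ s\in N$ we recover $s=b^{-1}\circ(b\circ s)\in N$. Second I would discard the closed tail of the $V$-side forcing: factoring $g_{[\lambda_0,F(\lambda)]}=g_{[\lambda_0,\lambda]}\times g_{(\lambda,F(\lambda)]}$ and setting $W_0:=V[G_{\lambda_0}*g_{[\lambda_0,\lambda]}]$, the factor $\prod_{[\lambda_0,\lambda]}\QQ_\gamma$ is $\lambda^+$-c.c.\ over $V[G_{\lambda_0}]$ (here the hypothesis that $\lambda_0$ is a closure point of $F$, together with $\GCH$, gives $\lambda^{<\lambda}=\lambda$ in $V[G_{\lambda_0}]$) while $\prod_{(\lambda,F(\lambda)]}\QQ_\gamma$ is ${\leq}\lambda$-closed, so by Easton's Lemma the tail stays ${\leq}\lambda$-distributive over $W_0$ and adds no new $\lambda$-sequences of ordinals; hence we may assume $f\in W_0$.

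The heart of the argument is to produce a single isomorphism $\Phi\in V[G_{\lambda_0}]$ from $\prod_{[\lambda_0,\lambda]}\QQ_\gamma$ onto $\prod^M_{[\lambda_0,\lambda]}\QQ^M_\gamma$ satisfying $g^M_{[\lambda_0,\lambda]}=\Phi(g_{[\lambda_0,\lambda]})$. On $[\lambda_0,\gamma_0)$ the two products literally coincide and $\Phi$ is the identity. On $[\gamma_0,\lambda]\subseteq[\gamma_0,\gamma_1)$ each factor $\QQ^M_\gamma=\QQ^*_\gamma=\Add^M(\gamma,j(F)(\gamma))$ is isomorphic to $\QQ_\gamma$ via a coordinatewise isomorphism $\theta\in V[G_{\lambda_0}]$ (using $|j(F)(\gamma)|^V=F(\gamma)$ and that $M$ and $V$ share cardinals ${\leq}F(\lambda)$), and since the complete embedding of (\ref{eqnplus}) is coordinatewise one has $g^+_{[\gamma_0,\lambda]}=\theta(g_{[\gamma_0,\lambda]})$. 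Now Lemma \ref{lemmaregular} gives $g^M_{[\gamma_0,\lambda]}=g^+_{[\gamma_0,\lambda]}$ and Lemma \ref{lemmasingular} gives $g^M_{[\gamma_0,\lambda]}=\sigma[g^+_{[\gamma_0,\lambda]}]$ for an automorphism $\sigma\in V[G_{\lambda_0}]$, so setting $\Phi$ equal to the identity on $[\lambda_0,\gamma_0)$ and to $\sigma\circ\theta$ (respectively $\theta$) on $[\gamma_0,\lambda]$ works; here I also use that $M[G_{\lambda_0}]$ is $\lambda$-closed in $V[G_{\lambda_0}]$, so that $\prod^M_{[\lambda_0,\lambda]}\QQ^M_\gamma=\prod_{[\lambda_0,\lambda]}\QQ^*_\gamma$ is computed correctly. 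In particular $g^M_{[\lambda_0,\lambda]}\in W_0$, whence $M[G_{\lambda_0}*g^M_{[\lambda_0,\lambda]}]\subseteq N$.

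With $\Phi$ in hand I would transfer names. Working in $V[G_{\lambda_0}]$, choose a nice $\prod_{[\lambda_0,\lambda]}\QQ_\gamma$-name $\dot f$ for $f$; by the $\lambda^+$-c.c.\ it has size ${\leq}\lambda$. Applying $\Phi$ yields a $\prod^M_{[\lambda_0,\lambda]}\QQ^M_\gamma$-name $\Phi\cdot\dot f\in V[G_{\lambda_0}]$, again of size ${\leq}\lambda$, with $(\Phi\cdot\dot f)[g^M_{[\lambda_0,\lambda]}]=f$. Every element of $\Phi\cdot\dot f$ is a condition of $\prod^M_{[\lambda_0,\lambda]}\QQ^M_\gamma$ paired with a canonical name for an ordinal, and hence lies in $M[G_{\lambda_0}]$; thus $\Phi\cdot\dot f$ is a ${\leq}\lambda$-sequence of elements of $M[G_{\lambda_0}]$ lying in $V[G_{\lambda_0}]$, and by the $\lambda$-closure of $M[G_{\lambda_0}]$ in $V[G_{\lambda_0}]$ we conclude $\Phi\cdot\dot f\in M[G_{\lambda_0}]$. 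Therefore $f=(\Phi\cdot\dot f)[g^M_{[\lambda_0,\lambda]}]\in M[G_{\lambda_0}*g^M_{[\lambda_0,\lambda]}]\subseteq N$, as required. The step I expect to be the main obstacle is the construction of $\Phi$: one must verify that the coordinatewise identifications $\QQ_\gamma\cong\QQ^*_\gamma$ and the twisting automorphism $\sigma$ supplied by Lemmas \ref{lemmaregular} and \ref{lemmasingular} genuinely assemble, uniformly across the regular and singular cases, into one isomorphism defined over $V[G_{\lambda_0}]$ carrying $g_{[\lambda_0,\lambda]}$ exactly onto $g^M_{[\lambda_0,\lambda]}$; granting this, the remainder is a routine nice-name count together with the closure of $M[G_{\lambda_0}]$.
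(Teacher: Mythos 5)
Your proof is correct and follows essentially the same route as the paper's: strip off the ${\leq}\lambda$-distributive tail $g_{(\lambda,F(\lambda)]}$ by Easton's Lemma, observe that $g^M_{[\lambda_0,\lambda]}$ is (an automorphic/isomorphic image of) $g_{[\lambda_0,\lambda]}$, generic over $V[G_{\lambda_0}]$ for a $\lambda^+$-c.c.\ poset, and then invoke the $\lambda$-closure of $M[G_{\lambda_0}]$ in $V[G_{\lambda_0}]$, splitting into the regular and singular cases exactly as Lemmas \ref{lemmaregular} and \ref{lemmasingular} dictate. The only difference is cosmetic: where the paper quotes the standard fact that a $\lambda^+$-c.c.\ generic preserves $\lambda$-closure of the inner model, you unpack it as an explicit nice-name transfer along the isomorphism $\Phi$.
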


\begin{proof}
It will suffice to argue that if $X$ is a $\lambda$-sequence of ordinals in $V[G_{\lambda_0}*(g_{[\lambda_0,\gamma_0)}\times g_{[\gamma_0,F(\lambda)]})]$ then $X$ is in $M[G_{\lambda_0}*(g_{[\lambda_0,\gamma_0)}\times g^M_{[\gamma_0,\lambda_1^M)})]$. Since $\prod_{(\lambda, F(\lambda)]}\QQ_\gamma$ is $\leq\lambda$-distributive in $V[G_{\lambda_0}*g_{[\lambda_0,\lambda]}]$ we have 
$X\in V[G_{\lambda_0}*g_{[\lambda_0,\lambda]}]$. Furthermore, since $\lambda<\gamma_1$ it follows from (\ref{eqnplus}) that 
\[X \in V[G_{\lambda_0}*g_{[\lambda_0,\lambda]}]=V[G_{\lambda_0}*(g_{[\lambda_0,\gamma_0)}\times g^+_{[\gamma_0,\lambda]})].\]

Since $M[G_{\lambda_0}]$ is closed under $\lambda$-sequences in $V[G_{\lambda_0}]$, it follows that 
\begin{align}
\prod\nolimits^M_{[\lambda_0,\lambda]}\QQ^M_\gamma =  \prod\nolimits_{[\lambda_0,\gamma_0)}\QQ_\gamma\times\prod\nolimits_{[\gamma_0,\lambda]}\QQ^*_\gamma. \label{isoposets}
\end{align}


First let us assume that $F(\lambda)$ is regular so that, by Lemma \ref{lemmaregular}, we have $g^M_{[\lambda_0,\lambda]}=g_{[\lambda_0,\gamma_0)}\times g^+_{[\gamma_0,\lambda]}$.  As the forcing in (\ref{isoposets}) is isomorphic to $\prod_{[\lambda_0,\lambda]}\QQ_\gamma$ in $V[G_{\lambda_0}]$, we see that it is $\lambda^+$-c.c. in $V[G_{\lambda_0}]$, and therefore the model \[M[G_{\lambda_0}*(g_{[\lambda_0,\gamma_0)}\times g^+_{[\gamma_0,\lambda]})]=M[G_{\lambda_0}*g^M_{[\lambda_0,\lambda]}]\]
is closed under $\lambda$-sequences in $V[G_{\lambda_0}*g_{[\lambda_0,\lambda]}]$. Thus $X\in M[G_{\lambda_0}*g^M_{[\lambda_0,\lambda]}]\subseteq M[G_{\lambda_0}*g^M_{[\lambda_0,\lambda_1^M)}]$.

Now let us assume $F(\lambda)$ is singular. By Lemma \ref{lemmasingular} we have $g^M_{[\lambda_0,\lambda]}=g_{[\lambda_0,\gamma_0)}\times \sigma[g^+_{[\gamma_0,\lambda]}]$ for some automorphism $\sigma$ of $\prod_{[\gamma_0,\lambda]}\QQ^+_\gamma$ in $V[G_{\lambda_0}]$. Since 
\[V[G_{\lambda_0}*(g_{[\lambda_0,\gamma_0)}\times g^+_{[\gamma_0,\lambda]})]= V[G_{\lambda_0}*(g_{[\lambda_0,\gamma_0)}\times \sigma[g^+_{[\gamma_0,\lambda]}])]\]
and since $g_{[\lambda_0,\gamma_0)}\times \sigma[g^+_{[\gamma_0,\lambda]}]$ is $V[G_{\lambda_0}]$-generic for the $\lambda^+$-c.c. forcing in (\ref{isoposets}), it follows as before that the model $M[G_{\lambda_0}*(g_{[\lambda_0,\gamma_0)}\times \sigma[g^+_{[\gamma_0,\lambda]}])]=M[G_{\lambda_0}*g^M_{[\lambda_0,\lambda]}]$ is closed under $\lambda$-sequences in $V[G_{\lambda_0}*g_{[\lambda_0,\lambda]}]$. Thus $X\in M[G_{\lambda_0}*g^M_{[\lambda_0,\lambda]}]\subseteq M[G_{\lambda_0}*g^M_{[\lambda_0,\lambda_1^M)}]$.
\end{proof}


\begin{lemma}
We can build an $M[G_{\lambda_0}*(g_{[\lambda_0,\gamma_0)}\times g^M_{[\gamma_0,\lambda_1^M)})]$-generic filter $G^M_{[\lambda_1^M,j(\kappa))}$ for $\P^M_{[\lambda_1^M,j(\kappa))}$ in $V[G_{\lambda_0}*g_{[\lambda_0,F(\lambda)]}]$.
\end{lemma}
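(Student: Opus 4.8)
The plan is to construct $G^M_{[\lambda_1^M,j(\kappa))}$ inside $W:=V[G_{\lambda_0}*g_{[\lambda_0,F(\lambda)]}]$ as a filter on $R:=\P^M_{[\lambda_1^M,j(\kappa))}$ that meets every maximal antichain of $R$ lying in the model $N:=M[G_{\lambda_0}*(g_{[\lambda_0,\gamma_0)}\times g^M_{[\gamma_0,\lambda_1^M)})]$; any such filter is automatically $N$-generic. Since the generics defining $N$ were all produced in $W$ by Lemmas \ref{lemmaregular} and \ref{lemmasingular}, and $M\subseteq W$, we have $N\subseteq W$, and by Lemma \ref{lemmaclosureaftertwisting} the model $N$ is closed under $\lambda$-sequences in $W$.

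First I would record the closure and chain condition of $R$ in $N$. Its least stage is $\lambda_1^M$ and it carries Easton support, so $R$ is ${<}\lambda_1^M$-closed; as $F(\lambda)<\lambda_1^M$, it is in particular closed under descending sequences of length ${\le}F(\lambda)$. Because $\kappa$ is inaccessible and closed under $F$, the ordinal $j(\kappa)$ is inaccessible in $M$ and a closure point of $j(F)$, so $\P^M_{j(\kappa)}$ is $j(\kappa)$-c.c.\ of $M$-size $j(\kappa)$; since $\P^M_{\lambda_1^M}$ has $M$-size $\lambda_1^M<j(\kappa)$, forcing with it preserves both the inaccessibility of $j(\kappa)$ and the $j(\kappa)$-c.c.\ of $R$, so $R$ is $j(\kappa)$-c.c.\ in $N$. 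Consequently every maximal antichain of $R$ has $N$-size ${<}j(\kappa)$, and since $j(\kappa)$ is inaccessible in $N$ there are at most $j(\kappa)$ of them. These antichains form a set of $N\subseteq W$, and by the chain $F(\lambda)=F(\gamma_0)\le j(F)(\gamma_0)<\lambda_1^M<j(\kappa)<F(\lambda)^+$ together with the cardinal preservation of $\P_{\lambda_0+1}$ (Lemma \ref{lemmaeaston}), this set has $W$-cardinality $|j(\kappa)|^V=F(\lambda)$. I would then fix in $W$ an enumeration $\langle A_\xi : \xi<F(\lambda)\rangle$ of them; the bijection witnessing $|j(\kappa)|=F(\lambda)$ collapses $j(\kappa)$ and so lives in $W$ and not in $N$, which is consistent with the filter we produce being $N$-generic.

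Next I would build, by recursion in $W$, a descending sequence $\langle p_\xi : \xi<F(\lambda)\rangle$ in $R$: at a successor step extend $p_\xi$ below some member of $A_\xi$, possible by maximality; at a limit step $\delta<F(\lambda)$ take a lower bound, which \emph{a priori} exists because $\delta<F(\lambda)<\lambda_1^M$ and $R$ is ${<}\lambda_1^M$-closed. The upward closure of $\{p_\xi : \xi<F(\lambda)\}$ is then the desired generic $G^M_{[\lambda_1^M,j(\kappa))}\in W$.

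The main obstacle will be that the ${<}\lambda_1^M$-closure of $R$ yields a lower bound for a descending sequence only when that sequence is captured by the model computing $R$, i.e.\ by $N$. For a limit $\delta$ of cofinality ${\le}\lambda$ this is immediate: a cofinal subsequence of length ${\le}\lambda$ consisting of conditions of $N$ belongs to $N$ by Lemma \ref{lemmaclosureaftertwisting}, so the bound exists in $R$, and hence when $F(\lambda)\le\lambda^+$ the construction goes through verbatim. When $F(\lambda)>\lambda^+$ there are limit stages of cofinality exceeding $\lambda$ whose initial segments need not lie in $N$, and the naive recursion breaks down; here I would replace it, exactly as in the singular case of Lemma \ref{lemmasingular} and in \cite{FriedmanHonzik:EastonsTheoremAndLargeCardinals}, by an induction along a matrix of conditions, arranging the antichains in a matrix of height $\cf(F(\lambda))$ and using the strong closure of $R$ together with the hypothesis $H(F(\lambda))\subseteq M$ to manufacture coherent lower bounds that $M$ captures. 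Carrying out this matrix bookkeeping is the crux of the argument.
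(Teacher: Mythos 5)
There is a genuine gap, and you have in fact located it yourself: your recursion of length $F(\lambda)$ breaks down at limit stages of cofinality greater than $\lambda$, because the initial segment of the descending sequence built in $W$ need not belong to $N$, so the ${<}\lambda_1^M$-closure of $R$ \emph{in $N$} cannot be invoked to produce a lower bound. You defer the repair to ``an induction along a matrix of conditions,'' but that technique (Sublemma 3.13 of \cite{FriedmanHonzik:EastonsTheoremAndLargeCardinals}) is designed for building generics for Easton \emph{products} of Cohen forcing, where conditions on distinct coordinates can be amalgamated freely; it is not set up for the tail of the \emph{iteration} $\P^M_{[\lambda_1^M,j(\kappa))}$, and you do not carry it out. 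As written, the crux of the argument is missing.

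The paper closes this gap with a different and much shorter device: count representing functions rather than antichains. Every dense subset of $\P^M_{[\lambda_1^M,j(\kappa))}$ in $N$ has the form $j(f)(j"\lambda,\alpha)$ for some $f\in V$ (a function on $P_\kappa\lambda\times\kappa$ whose values are names for dense subsets of a tail of $\P_\kappa$) and some $\alpha<F(\lambda)$; under $\GCH$ there are only $\lambda^+$ such functions $f$. For a fixed $f$, the whole family $\langle j(f)(j"\lambda,\alpha):\alpha<F(\lambda)\rangle$ is an element of $N$, so the ${\leq}F(\lambda)$-closure of the tail forcing \emph{inside $N$} yields a single condition meeting all $F(\lambda)$ of these dense sets at once. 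The outer recursion therefore has length only $\lambda^+$, every limit stage has cofinality at most $\lambda$, and Lemma \ref{lemmaclosureaftertwisting} (closure of $N$ under $\lambda$-sequences in $W$) supplies the lower bounds. Your preliminary observations (that $N\subseteq W$, that $R$ is highly closed in $N$, and that the collapsing bijection $|j(\kappa)|=F(\lambda)$ lives in $W$ but not $N$) are all fine, but you should replace the direct enumeration of antichains by this function-counting diagonalization.
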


\begin{proof} 
There are at most $\lambda^+$ functions in $V$ that represent names for dense subsets of a tail of $j(\P_\kappa)$. Thus every dense subset of $\P^M_{[\lambda_1^M,j(\kappa))}$ in $M[G_{\lambda_0}*g^M_{[\lambda_0,\lambda_1^M)}]$ has a name represented by one of these functions. We may use the fact that $\P^M_{[\lambda_1^M,j(\kappa))}$ is $\leq{F(\lambda)}$-closed in $M[G_{\lambda_0}*g^M_{[\lambda_0,\lambda_1^M)}]$ and that $M[G_{\lambda_0}*g^M_{[\lambda_0,\lambda_1^M)}]$ is closed under $\lambda$-sequences in $V[G_{\lambda_0}*g_{[\lambda_0,F(\lambda)]}]$ to build a decreasing $\lambda^+$-sequence of conditions from $\P^M_{[\lambda_1^M,j(\kappa))}$ in $V[G_{\lambda_0}*g_{[\lambda_0,F(\lambda)]}]$ meeting every dense subset of $\P^M_{[\lambda_1^M,j(\kappa))}$ in $M[G_{\lambda_0}*g^M_{[\lambda_0,\lambda_1^M)}]$. It follows that this $\lambda^+$-sequence of conditions generates the desired generic filter.
\end{proof}

Thus we may lift $j$ to
\[j:V[G_\kappa]\to M[j(G_\kappa)],\]
where $j(G_\kappa)=G_{\lambda_0}*g^M_{[\lambda_0,\lambda_1^M)}*G^M_{[\lambda_1^M,j(\kappa))}$ and $j$  is a class of $V[G_{\lambda_0}*g_{[\lambda_0,F(\lambda)]}]$. Furthermore, we have that $M[j(G_\kappa)]$ is closed under $\lambda$-sequences in $V[G_{\lambda_0}*g_{[\lambda_0,F(\lambda))}]$ and 
\[M[j(G_\kappa)]=\{j(f)(j"\lambda,\alpha)\st f:P_\kappa\lambda\times\kappa\to V\ \and\ \alpha<F(\lambda)\ \and\ f\in V[G_\kappa]\}.\]

\subsection{Outline}

Our goal is to lift $j$ through the forcing $\P_{[\kappa,\lambda_0)}*\dot\Q_{[\lambda_0,\lambda]}=\P_{[\kappa,\lambda_0)}*\prod_{[\lambda_0,\lambda]}\QQ_\gamma$. Our strategy will be to first use a master condition for lifting $j$ through $\P_{[\kappa,\lambda_0)}$ of this forcing and then to use the surgery argument of \cite{CodyMagidor} to lift $j$ through $\Q_{[\lambda_0,\lambda]}$.

\subsection{Lifting the embedding through $\P_{[\kappa,\lambda_0)}$ via a master condition argument.}

In $V[G_\kappa]$, the poset $\P_{[\kappa,\lambda_0)}$ has size no larger than $\lambda$ and thus, $j"G_{[\kappa,\lambda_0)}$ has size at most $\lambda$ in $V[G_{\lambda_0}*g_{[\lambda_0, F(\lambda))}]$. Hence $j"G_{[\kappa,\lambda_0)}\in M[j(G_\kappa)]$ and since $j(\P_{[\kappa,\lambda_0)})$ is ${<}j(\kappa)$-directed closed in $M[j(G_\kappa)]$, there is a master condition $p_{[\kappa,\lambda_0)}\in j(\P_{[\kappa,\lambda_0)})$ extending every element of $j"G_{[\kappa,\lambda_0)}$. We now build an $M[j(G_\kappa)]$-generic filter below $p_{[\kappa,\lambda_0)}$. First notice that every dense subset of $j(\P_{[\kappa,\lambda_0)})$ in $M[j(G_\kappa)]$ can be written as $j(h)(j"\lambda,\alpha)$ where $h\in V[G_\kappa]$ is a function from $P_\kappa\lambda\times\kappa$ into the collection of dense subsets of $\P_{[\kappa,\lambda_0)}$ and $\alpha<F(\lambda)$. Since in $V[G_\kappa]$ there are no more than $\lambda^+$ such functions, it follows that we can enumerate them as $\<h_\xi\st\xi<\lambda^+\>\in V[G_\kappa]$ so that every dense subset of $j(\P_{[\kappa,\lambda_0)})$ in $M[j(G_\kappa)]$ is of the form $j(h_\xi)(j"\lambda,\alpha)$ for some $\xi<\lambda^+$ and some $\alpha<F(\lambda)$. One can build a decreasing $\lambda^+$-sequence of conditions $\<p_\xi\st\xi<\lambda^+\>\in V[G_{\lambda_0}*g_{[\lambda_0,F(\lambda))}]$ below $p_{[\kappa,\lambda_0)}$, such that for every $\xi<\lambda^+$ the condition $p_\xi\in j(\P_{[\kappa,\lambda_0)})$ meets every dense subset of $j(\P_{[\kappa,\lambda_0)})$ in $M[j(G_\kappa)]$ appearing in the sequence $\<j(h_\xi)(j"\lambda,\alpha)\st\alpha<F(\lambda)\>$. Let $G^M_{[j(\kappa),j(\lambda_0))}\in V[G_{\lambda_0}*g_{[\lambda_0,F(\lambda))}]$ be the filter generated by $\<p_\xi\st\xi<\lambda^+\>$. It follows by construction that $G^M_{[j(\kappa),j(\lambda_0))}$ is $M[j(G_\kappa)]$-generic and $j"G_{[\kappa,\lambda_0)}\subseteq G^M_{[j(\kappa),j(\lambda_0))}$. Thus we may lift $j$ to 
\begin{align}
j:V[G_\kappa*G_{[\kappa,\lambda_0)}]\to M[j(G_\kappa)*j(G_{[\kappa,\lambda_0)})]\label{juptotlambda_0}
\end{align}
where $j(G_{[\kappa,\lambda_0)})=G^M_{[j(\kappa),j(\lambda_0))}$ and where $j$ is a class of $V[G_{\lambda_0}*g_{[\lambda_0,F(\lambda)]}]$. Furthermore, $M[j(G_\kappa)*j(G_{[\kappa,\lambda_0)})]$ is closed under $\lambda$-sequences in $V[G_{\lambda_0}*g_{[\lambda_0,F(\lambda)]}]$.

\subsection{Obtaining a generic for $j(\Q_{[\lambda_0,\lambda]})$ for use in surgery}

Now we will lift $j$ through the forcing $\Q_{[\lambda_0,\lambda]}$ by applying the surgery technique of \cite{CodyMagidor}. We will factor the embedding in (\ref{juptotlambda_0}) through an ultrapower embedding $j_0$, force with $j_0(\Q_{[\lambda_0,\lambda]})$ over $V[G_{\lambda_0}*g_{[\lambda_0,F(\lambda)]}]$ and then modify the generic to lift the embedding.

Let $X=\{j(h)(j"\lambda)\mid h:P_\kappa\lambda\to V[G_{\lambda_0}], h\in V[G_{\lambda_0}]\}$. Then it follows that $X\elesub M[j(G_{\lambda_0})]$. Let $k:M_0'\to M[j(G_{\lambda_0})]$ be the inverse of the Mostowski collapse $\pi:X\to M_0'$ and let $j_0:V[G_{\lambda_0}]\to M_0'$ be defined by $j_0:=k^{-1}\circ j$. It follows that $j_0$ is the ultrapower embedding by the measure $U_0:=\{X\subseteq P_\kappa\lambda\mid j"\lambda\in j(X)\}$ and we will see that $U_0\in V[G_{\lambda_0}*\pi(g^M_{[\lambda_0,F(\lambda)]})]$. Using a theorem of Laver \cite{Laver:CertainVeryLargeCardinalsAreNot}, which says that the ground model is always definable from a parameter in any set forcing extension, it follows by elementarity that $M_0'$ is of the form $M_0[j_0(G_{\lambda_0})]$, where $M_0\subseteq M_0'$ and $j_0(G_{\lambda_0})\subseteq j_0(\P_{\lambda_0})\in M_0'$ is $M_0$-generic.

\begin{remark}
Since $j"\lambda\in X$ it follows that $X$ is closed under $\lambda$-sequences in $V[G_{\lambda_0}*g_{[\lambda_0,F(\lambda)]}]$. Thus $\lambda^+\subseteq X$ and hence the transitive collapse $\pi$ is the identity on $[0,\lambda^+)$. In fact $\lambda^+$ also belongs to $X$ so the critical point of $k$ is greater than $\lambda^+$.
\end{remark}

In Lemma \ref{lemmadistributive} and Lemma \ref{lemmachaincondition} below, we show that the forcing $j_0(\Q_{[\lambda_0,\lambda]})$ behaves well in the model $V[G_{\lambda_0}*g_{[\lambda_0,F(\lambda)]}]$, in the sense that it is highly distributive and has a good chain condition. Then it easily follows that forcing with $j_0(\Q_{[\lambda_0,\lambda]})$ over $V[G_{\lambda_0}*g_{[\lambda_0,F(\lambda)]}]$ preserves cardinals, and since $\SCH$ holds in $V[G_{\lambda_0}*g_{[\lambda_0,F(\lambda)]}]$, this forcing does not disturb the continuum function (see Lemma \ref{lemmadonotdisturb}).


\begin{lemma}\label{lemmadistributive}
$j_0(\Q_{[\lambda_0,\lambda]})$ is ${\leq}\lambda$-distributive in $V[G_{\lambda_0}*g_{[\lambda_0,F(\lambda)]}]$.
\end{lemma}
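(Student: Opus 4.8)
The plan is to compute the relevant closure of $j_0(\Q_{[\lambda_0,\lambda]})$ inside $M_0'$, where it is an Easton-support product all of whose coordinates lie strictly above $\lambda$, and then to push this closure up to the outer model using that $M_0'$ is closed under $\lambda$-sequences there. Throughout, write $W=V[G_{\lambda_0}*g_{[\lambda_0,F(\lambda)]}]$ for the model in which distributivity is being claimed.

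First I would identify the forcing. By elementarity, in $M_0'$ the poset $j_0(\Q_{[\lambda_0,\lambda]})$ is the Easton-support product $\prod_{\gamma}\Add(\gamma,j_0(F)(\gamma))$ taken over the regular cardinals $\gamma$ of $M_0'$ lying in $[j_0(\lambda_0),j_0(\lambda)]$. The key observation is that its least coordinate is at least $j_0(\lambda_0)$, and since $\crit(j_0)=\kappa\leq\lambda_0$ and $j_0$ is the ultrapower by the normal fine measure $U_0$ on $P_\kappa\lambda$, we have $j_0(\lambda_0)\geq j_0(\kappa)>\lambda$. Hence every factor of $j_0(\Q_{[\lambda_0,\lambda]})$ is of the form $\Add(\gamma,\cdot)$ with $\gamma$ a regular cardinal of $M_0'$ satisfying $\gamma>\lambda$.

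Next I would invoke the standard closure computation for Easton-support products: a product of posets $\Add(\gamma,\mu_\gamma)$ indexed by regular cardinals $\gamma\geq\delta$ with Easton support is ${<}\delta$-closed, because the coordinatewise union of a descending sequence of length ${<}\delta$ is again a condition (each factor $\Add(\gamma,\cdot)$ is ${<}\gamma$-closed, and the Easton-support requirement at each $\gamma\geq\delta$ survives since $\gamma$ is regular and the sequence is shorter than $\gamma$). Applying this in $M_0'$ with $\delta=j_0(\lambda_0)>\lambda$ shows that $j_0(\Q_{[\lambda_0,\lambda]})$ is ${\leq}\lambda$-closed in $M_0'$. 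To transfer this to $W$, I would use that $M_0'$ is closed under $\lambda$-sequences in $W$: this follows from the Remark above, since $X$ is closed under $\lambda$-sequences in $W$ and the critical point of $k:M_0'\to M[j(G_{\lambda_0})]$ exceeds $\lambda^+$, so applying $k$ and then the collapse $\pi$ (which fixes the index ordinals below $\lambda^+$) carries a $\lambda$-sequence from $M_0'$ into $X$ and back. Consequently, any descending sequence of conditions in $j_0(\Q_{[\lambda_0,\lambda]})$ of length ${\leq}\lambda$ lying in $W$ is actually an element of $M_0'$, and the ${\leq}\lambda$-closure computed inside $M_0'$ furnishes a lower bound, which is a genuine lower bound in $W$ as well. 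Thus $j_0(\Q_{[\lambda_0,\lambda]})$ is ${\leq}\lambda$-closed, and a fortiori ${\leq}\lambda$-distributive, in $W$.

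The main obstacle is precisely this transfer step, since closure is not upward absolute between inner models and $W$; the entire argument hinges on $M_0'$ being closed under $\lambda$-sequences in $W$, after which the distributivity in $W$ drops out of the closure computed inside $M_0'$. A secondary point to pin down carefully is the strict inequality $j_0(\lambda_0)>\lambda$, which is what guarantees that no coordinate of the product sits at or below $\lambda$, and hence that the product is genuinely ${\leq}\lambda$-closed rather than merely ${<}\lambda$-closed.
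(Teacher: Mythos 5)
Your identification of $j_0(\Q_{[\lambda_0,\lambda]})$ as an Easton-support product all of whose coordinates lie above $j_0(\lambda_0)\geq j_0(\kappa)>\lambda$, and the resulting ${\leq}\lambda$-closure computed inside $M_0'=M_0[j_0(G_{\lambda_0})]$, are correct and match the first half of the paper's argument. The gap is the transfer step: $M_0'$ is \emph{not} closed under $\lambda$-sequences in $W=V[G_{\lambda_0}*g_{[\lambda_0,F(\lambda)]}]$, and the intermediate conclusion you draw from it (that $j_0(\Q_{[\lambda_0,\lambda]})$ is ${\leq}\lambda$-\emph{closed} in $W$) is false, not merely unproven. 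The model $M_0'$ is contained in $V^*:=V[G_{\lambda_0}][g^{M_0}_{[\lambda_0,\lambda]}\times g_{(\lambda,F(\lambda)]}]$, where $g^{M_0}_{[\lambda_0,\lambda]}=\pi(g^M_{[\lambda_0,\lambda]})$ is generic for the truncated product $T=\pi(\prod^M_{[\lambda_0,\lambda]}\QQ^+_\gamma)$; but $W=V^*[g_{[\lambda_0,\lambda]}]$ is a further nontrivial extension by a residual copy of $\prod_{[\lambda_0,\lambda]}\QQ_\gamma$, which adds subsets of $\lambda$ --- hence $\lambda$-sequences of ordinals --- not lying in $V^*\supseteq M_0'$. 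For the same reason the literal reading of the Remark you invoke cannot be sustained: the argument there only yields closure of $X$ under $\lambda$-sequences lying in $V[G_{\lambda_0}]$, which is all that is needed to get $\lambda^+\subseteq X$ and $\crit(k)>\lambda^+$; your back-and-forth through $\pi$ and $k$ would otherwise force $M_0'$ to contain the new Cohen subsets of $\lambda$, which it does not. Concretely, a descending $\lambda$-sequence of conditions in $W$ can code such a new subset of $\lambda$ into its union, so the union need not belong to $M_0'$ and need not be a condition; closure genuinely fails in $W$.

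The lemma is nevertheless true, and the repair is the second half of the paper's proof, which your argument omits. One first establishes that $M_0'$, and hence $j_0(\Q_{[\lambda_0,\lambda]})$, is ${\leq}\lambda$-closed in the intermediate model $V^*$ (using that $M$ is closed under $\lambda$-sequences in $V$, that $G_{\lambda_0}*g^{M_0}_{[\lambda_0,\lambda]}$ is added by $\lambda^+$-c.c.\ forcing, and that the forcing adding $g_{(\lambda,F(\lambda)]}$ is ${\leq}\lambda$-distributive and so adds no new $\lambda$-sequences). One then observes that $\prod_{[\lambda_0,\lambda]}\QQ_\gamma$ is isomorphic to $T\times\prod_{[\lambda_0,\lambda]}\QQ_\gamma$, so that $W$ is a $\lambda^+$-c.c.\ forcing extension of $V^*$, and applies Easton's Lemma to conclude that ${\leq}\lambda$-\emph{distributivity} (though not closure) survives into $W$. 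You need some form of this second step; without it the proof does not go through.
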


\begin{proof}
Define $S := j_0({\Q}_{[\lambda_0,\lambda]})$ and  $g^{M_0}_{[\lambda_0,\lambda]} := \pi(g^M_{[\lambda_0,\lambda]})$. It follows that $g^{M_0}_{[\lambda_0,\lambda]}$ is generic over $M_0[G_{\lambda_0}]$ for  $T := \pi(\prod^M_{[\lambda_0,\lambda]}\QQ^+_\gamma)$. Notice that $T$ is a ``truncated'' version of $\prod^M_{[\lambda_0,\lambda]}\QQ^+_\gamma$ because $\pi$ is the identity on $[0,\lambda]$; moreover, $g^{M_0}_{[\lambda_0,\lambda]}$ is generic for $T$ over $V[G_{\lambda_0}]$ and $T$ is $\lambda^+$-c.c. over $V[G_{\lambda_0}]$.

 We prove the lemma in two steps: (i) Firstly, we show that $M_0[j_0(G_{\lambda_0})]$ is closed under $\lambda$-sequences in $V^* := V[G_{\lambda_0}][g^{M_0}_{[\lambda_0,\lambda]} \times g_{(\lambda,F(\lambda)]}]$; this will imply that $S$ is $\le \lambda$-closed in $V^*$. (ii) Secondly, we show that $S$ remains $\le \lambda$-distributive in $V[G_{\lambda_0}*g_{[\lambda_0,F(\lambda)]}]$, which can be written -- as we will argue -- as $V^*[g_{[\lambda_0,\lambda]}]$.

As for (i), notice that $j_0|V: V \to M$ is elementary, and $M$ is closed under $\lambda$-sequences in $V$. The generic $G_{\lambda_0} * g^{M_0}_{[\lambda_0,\lambda]}$ is added by a $\lambda^+$-c.c. forcing over $V$, and hence $M_0[G_{\lambda_0}][g^{M_0}_{[\lambda_0,\lambda]}]$ is still closed under $\lambda$-sequences in $V[G_{\lambda_0}][g^{M_0}_{[\lambda_0,\lambda]}]$. Finally, the forcing adding $g_{(\lambda,F(\lambda)]}$ is, by the Easton's lemma, $\le \lambda$-distributive over $V[G_{\lambda_0}][g^{M_0}_{[\lambda_0,\lambda]}]$ (and therefore does not add new $\lambda$-sequences); now (i) follows because $M_0[j_0(G_{\lambda_0})]$ is included in $V^*$.

As for (ii), notice that $\prod_{[\lambda_0,\lambda]}\QQ_\gamma$ (with the associated generic $g_{[\lambda_0,\lambda]}$), is isomorphic in $V[G_{\lambda_0}]$ to $T \times \prod_{[\lambda_0,\lambda]}\QQ_\gamma$. Now (ii), and hence the lemma, follows by another application of the Easton's lemma, using the $\lambda^+$-cc of $\prod_{[\lambda_0,\lambda]}\QQ_\gamma$.
\end{proof}

\begin{lemma}\label{lemmachaincondition}
$j_0(\Q_{[\lambda_0,\lambda]})$ is $\lambda^{++}$-c.c. in $V[G_{\lambda_0}*g_{[\lambda_0,F(\lambda)]}]$.
\end{lemma}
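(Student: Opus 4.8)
The plan is to show that $S := j_0(\Q_{[\lambda_0,\lambda]})$ is $\lambda^{++}$-c.c. in $V[G_{\lambda_0}*g_{[\lambda_0,F(\lambda)]}]$ by reducing it to a chain-condition computation inside the model $M_0' = M_0[j_0(G_{\lambda_0})]$ and then controlling how passing to the larger model $V[G_{\lambda_0}*g_{[\lambda_0,F(\lambda)]}]$ can create new antichains. First I would compute the chain condition of $S$ as the ultrapower $j_0$ sees it. Since $j_0:V[G_{\lambda_0}]\to M_0'$ is the ultrapower by the normal fine measure $U_0$ on $P_\kappa\lambda$, and $\Q_{[\lambda_0,\lambda]}$ is (in $V[G_{\lambda_0}]$) an Easton-support product with top cardinal $\lambda$, by elementarity $S = j_0(\Q_{[\lambda_0,\lambda]})$ is, in $M_0'$, the Easton-support product $\prod_{[j_0(\lambda_0),j_0(\lambda)]}$ of the relevant Cohen forcings, whose size is $j_0(F(\lambda))$. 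In $M_0'$ this forcing is $(j_0(F(\lambda))^+)^{M_0'}$-c.c.\ by Lemma \ref{lemmaeaston} applied inside $M_0'$.

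The crucial point is then to transfer this chain condition from $M_0'$ up to $V[G_{\lambda_0}*g_{[\lambda_0,F(\lambda)]}]$. Here I would exploit the closure of $M_0'$: by the Remark following the definition of $j_0$, the critical point of $k:M_0'\to M[j(G_{\lambda_0})]$ is above $\lambda^+$, and $M_0'$ (equivalently $X$) is closed under $\lambda$-sequences in $V[G_{\lambda_0}*g_{[\lambda_0,F(\lambda)]}]$. The key quantitative fact is that $|j_0(F(\lambda))|^{V[G_{\lambda_0}*g_{[\lambda_0,F(\lambda)]}]} = \lambda^+$: because $j_0$ is an ultrapower by a measure on $P_\kappa\lambda$, every element of $M_0'$ has the form $j_0(h)(j_0''\lambda)$ for $h\in V[G_{\lambda_0}]$, so $j_0(F(\lambda))$ is the image of $F(\lambda)$ under a function of $j_0''\lambda$, and there are only $(\lambda^{<\kappa})^{\lambda} = \lambda^+$ many such functions after the $\lambda^+$-c.c.\ forcing $G_{\lambda_0}$. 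Hence in $V[G_{\lambda_0}*g_{[\lambda_0,F(\lambda)]}]$ the forcing $S$ has size $\lambda^+$, and any forcing of size $\lambda^+$ is trivially $\lambda^{++}$-c.c.

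Thus I expect the cleanest route avoids transferring a chain condition at all: once one observes that $|S|^{V[G_{\lambda_0}*g_{[\lambda_0,F(\lambda)]}]} = \lambda^+$, the $\lambda^{++}$-c.c.\ is immediate since a poset of size $\lambda^+$ cannot contain an antichain of size $\lambda^{++}$. The main obstacle, and the step requiring care, is the cardinality computation $|j_0(F(\lambda))|^V = \lambda^+$: one must verify that the relevant counting of functions $h:P_\kappa\lambda\to V[G_{\lambda_0}]$ representing ordinals below $j_0(F(\lambda))$ yields exactly $\lambda^+$ many, using $\GCH$-type bounds surviving the $\lambda^+$-c.c.\ forcing $\P_{\lambda_0}$ together with the fact that $|P_\kappa\lambda| = \lambda^{<\kappa} = \lambda$ under $\GCH$. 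Once this bound is in hand, the chain condition follows formally, and I would state it as such, referring back to the Remark on $\lambda$-closure of $X$ and to the representation of $M_0'$ elements via $j_0(h)(j_0''\lambda)$ to justify that the enumeration of $S$ lives in $V[G_{\lambda_0}*g_{[\lambda_0,F(\lambda)]}]$.
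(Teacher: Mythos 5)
Your proposed shortcut does not work, because the key cardinality claim is false. You assert that $|j_0(F(\lambda))|$, and hence $|j_0(\Q_{[\lambda_0,\lambda]})|$, equals $\lambda^+$ in $V[G_{\lambda_0}*g_{[\lambda_0,F(\lambda)]}]$, counting ``$(\lambda^{<\kappa})^\lambda=\lambda^+$ many functions.'' But an ordinal below $j_0(F(\lambda))$ is represented by a function $h:P_\kappa\lambda\to F(\lambda)$, and the number of such functions in $V[G_{\lambda_0}]$ is $F(\lambda)^{|P_\kappa\lambda|}=F(\lambda)^\lambda=F(\lambda)$ (using $\cf(F(\lambda))>\lambda$ and $\GCH$ above $\lambda_0$ in $V[G_{\lambda_0}]$), not $\lambda^\lambda=\lambda^+$. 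Indeed the paper itself computes $|j_0(\Q_{[\lambda_0,\lambda]})|\leq|{}^\lambda F(\lambda)\cap V[G_{\lambda_0}]|=F(\lambda)$ in the proof of Lemma \ref{lemmadonotdisturb}. Since $F(\lambda)$ is typically much larger than $\lambda^+$ (e.g.\ $F(\lambda)=\lambda^{+5}$), the poset is not small and the ``any poset of size $\lambda^+$ is $\lambda^{++}$-c.c.'' argument collapses. Your fallback route in the first paragraph --- computing a chain condition inside $M_0'$ and transferring it upward --- also has a gap as stated: chain conditions are not upward absolute, and an antichain of size $\lambda^{++}$ in $V[G_{\lambda_0}*g_{[\lambda_0,F(\lambda)]}]$ is not a $\lambda$-sequence, so the $\lambda$-closure of $M_0'$ gives no control over it.

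The paper's actual argument works at the level of representing functions: each $p\in j_0(\Q_{[\lambda_0,\lambda]})$ is $j_0(h_p)(j"\lambda)$ for some $h_p:P_\kappa\lambda\to\Q_{[\lambda_0,\lambda]}$, which yields a condition $\bar h_p$ in the full-support power $\bar\Q=(\Q_{[\lambda_0,\lambda]})^\lambda$ of $V[G_{\lambda_0}]$; an antichain of size $\lambda^{++}$ in $j_0(\Q_{[\lambda_0,\lambda]})$ induces one in $\bar\Q$. One then proves $\bar\Q$ is $\lambda^{++}$-c.c.\ in $V[G_{\lambda_0}*g_{[\lambda^+,F(\lambda)]}]$ by a $\Delta$-system argument (conditions in $\bar\Q$ have domains of size $\leq\lambda$ and $2^\lambda=\lambda^+$ there), and finally absorbs the remaining generic $g_{[\lambda_0,\lambda]}$ using the fact that the product of a $\lambda^{++}$-Knaster forcing with a $\lambda^{++}$-c.c.\ forcing is $\lambda^{++}$-c.c. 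Some version of these three steps --- or another genuine combinatorial argument --- is needed; a raw cardinality count cannot suffice.
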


\begin{proof}
Notice that each condition $p\in j_0(\Q_{[\lambda_0,\lambda]})$ can be written as $j_0(h_p)(j"\lambda)$ for some function $h_p:P_\kappa\lambda\to \Q_{[\lambda_0,\lambda]}$ in $V[G_{\lambda_0}]$. Thus, each condition $p\in j_0(\Q_{[\lambda_0,\lambda]})$ leads to a function $\bar{h}_p:\lambda\to \Q_{[\lambda_0,\lambda]}$ in $V[G_{\lambda_0}]$, which is a condition in the full-support product of $\lambda$ copies of $\Q_{[\lambda_0,\lambda]}$ taken in $V[G_{\lambda_0}]$, denoted by $\bar{\Q}=(\Q_{[\lambda_0,\lambda]})^\lambda$. 

Let us argue that $\bar{\Q}$ is $\lambda^{++}$-c.c. in $V[G_{\lambda_0}*g_{[\lambda^+,F(\lambda)]}]$. We define the domain of a condition $p=\<p_\xi\st\xi<\lambda\>\in\bar{\Q}$ to be the disjoint union of the domains of its coordinates: ${\rm domain}(p):=\bigsqcup_{\xi<\lambda}\dom(p_\xi)$. It follows that each $p\in \bar{\Q}$, being the union of $\lambda$ sets, each of size less than $\lambda$, has domain of size at most $\lambda$. Suppose $A$ is an antichain of $\bar{\Q}$ in $V[G_{\lambda_0}*g_{[\lambda^+,F(\lambda)]}]$ of size $\lambda^{++}$. If there are $\lambda^{++}$ conditions in $A$ that have a common domain, say $d$, then we immediately get a contradiction because, in $V[G_{\lambda_0}*g_{[\lambda^+,F(\lambda)]}]$, there are at most $2^\lambda=\lambda^+$ functions in $2^d$. Otherwise, the set ${\rm domain}(A)=\{{\rm domain}(p)\st p\in A\}$ has size $\lambda^{++}$. Since $2^\lambda=\lambda^+$ in $V[G_{\lambda_0}*g_{[\lambda^+,F(\lambda)]}]$, it follows that $(\lambda^+)^{<\lambda^+}=\lambda^+$, and hence, by the $\Delta$-system lemma, ${\rm domain}(A)$ contains a $\Delta$-system of size $\lambda^{++}$ with root $r$. This produces a contradiction because, in $V[G_{\lambda_0}*g_{[\lambda^+,F(\lambda)]}]$ we have $|2^r|=2^\lambda=\lambda^+$.

To see that $\bar{\Q}$ is $\lambda^{++}$-c.c. in 
\begin{align}
V[G_{\lambda_0}*g_{[\lambda_0,F(\lambda)]}]=V[G_{\lambda_0}*g_{[\lambda^+,F(\lambda)]}][g_{[\lambda_0,\lambda]}]
\label{bigmodel}
\end{align}
we will use the fact that the product of $\theta^+$-Knaster forcing with $\theta^+$-c.c. forcing is $\theta^+$-c.c., where $\theta>\omega$ is a cardinal. Since the forcing $g_{[\lambda_0,\lambda]}\subseteq\Q_{[\lambda_0,\lambda]}$ is $\lambda^{++}$-Knaster and $\bar{\Q}$ is $\lambda^{++}$-c.c. in $V[G_{\lambda_0}*g_{[\lambda^+,F(\lambda)]}]$, it follows that $\bar{\Q}$ is $\lambda^{++}$-c.c. in the model $V[G_{\lambda_0}*g_{[\lambda_0,F(\lambda)]}]=V[G_{\lambda_0}*g_{[\lambda^+,F(\lambda)]}][g_{[\lambda_0,\lambda]}]$.

It remains to show that an antichain of $j_0(\Q_{[\lambda_0,\lambda]})$ in $V[G_{\lambda_0}*g_{[\lambda_0,F(\lambda)]}]$ with size $\lambda^{++}$ would lead to an antichain of $\bar{\Q}$ in $V[G_{\lambda_0}*g_{[\lambda_0,F(\lambda)]}]$ of size $\lambda^{++}$, but this is quite easy. Suppose $A$ is an antichain of $j_0(\Q_{[\lambda_0,\lambda]})$ with size $\delta$ in $V[G_{\lambda_0}*g_{[\lambda_0,F(\lambda)]}]$. Each $p\in A$ is of the form $j_0(h_p)(j"\lambda)$ where $h_p:P_\kappa\lambda\to \Q_{[\lambda_0,\lambda]}$. As mentioned above, each $h_p$ leads to a condition $\bar{h}_p\in\bar{\Q}$. It is easy to check that $\bar{A}:=\{\bar{h}_p\mid p\in A\}$ is an antichain of $\bar{\Q}$ in $V[G_{\lambda_0}*g_{[\lambda_0,F(\lambda)]}]$ of size $\lambda^{++}$.
\end{proof}

\begin{lemma}\label{lemmadonotdisturb}
Forcing with $j_0(\Q_{[\lambda_0,\lambda]})$ over $V[G_{\lambda_0}*g_{[\lambda_0,F(\lambda)]}]$ preserves cardinals and does not disturb the continuum function. 
\end{lemma}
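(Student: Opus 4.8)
The plan is to combine the distributivity result of Lemma \ref{lemmadistributive} with the chain condition result of Lemma \ref{lemmachaincondition}, and then invoke the fact that $\SCH$ holds in the relevant model to control the continuum function. First I would observe that cardinal preservation is essentially immediate: by Lemma \ref{lemmadistributive} the forcing $j_0(\Q_{[\lambda_0,\lambda]})$ is ${\leq}\lambda$-distributive in $V[G_{\lambda_0}*g_{[\lambda_0,F(\lambda)]}]$, so it adds no new ${\leq}\lambda$-sequences and hence preserves all cardinals $\leq\lambda^+$; and by Lemma \ref{lemmachaincondition} it is $\lambda^{++}$-c.c., so it preserves all cardinals $\geq\lambda^{++}$. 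Together these two facts cover every cardinal, so cardinals are preserved.

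The more substantive part is showing that the continuum function is not disturbed. For cardinals $\delta\leq\lambda$, distributivity gives that no new subsets of $\delta$ are added, so $2^\delta$ is unchanged. For cardinals $\delta\geq\lambda^{++}$, the $\lambda^{++}$-c.c. bounds the number of nice names: since $|j_0(\Q_{[\lambda_0,\lambda]})|$ can be computed in $V[G_{\lambda_0}*g_{[\lambda_0,F(\lambda)]}]$ and the forcing is $\lambda^{++}$-c.c., each subset of $\delta$ in the extension has a nice name which is determined by $\delta$-many antichains, each of size ${<}\lambda^{++}$; a counting argument using $\GCH$-type arithmetic above $F(\lambda)$ (equivalently, the fact that the ground model $V[G_{\lambda_0}*g_{[\lambda_0,F(\lambda)]}]$ already satisfies the desired continuum pattern below and $\SCH$ above) then shows $2^\delta$ is not increased, while trivially it is not decreased. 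The delicate interval is $\delta=\lambda^+$: here I would argue that since the forcing is $\lambda^{++}$-c.c. and has size at most $2^\lambda=\lambda^+$ in the relevant model, a nice-name count bounds the number of new subsets of $\lambda^+$ by $(\lambda^+)^{\lambda^+}$ as computed in the ground model, and $\SCH$ in $V[G_{\lambda_0}*g_{[\lambda_0,F(\lambda)]}]$ keeps this value equal to $(\lambda^+)^+$ or $\lambda^+$ as appropriate, leaving $2^{\lambda^+}$ unchanged.

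\emph{The hard part will be} the explicit cardinal-arithmetic bookkeeping at and just above $F(\lambda)$, precisely because $F(\lambda)$ may be singular (the case split that governed Lemmas \ref{lemmaregular} and \ref{lemmasingular}). The point of invoking $\SCH$ is exactly to neutralize this difficulty: in $V[G_{\lambda_0}*g_{[\lambda_0,F(\lambda)]}]$ the continuum function below $F(\lambda)^+$ already agrees with $F$, and above $F(\lambda)$ one still has $\GCH$, so $\SCH$ lets us compute powers of singular cardinals from the continuum function on regulars. I would state this as the key computational input and carry out the nice-name counts uniformly, checking that forcing with $j_0(\Q_{[\lambda_0,\lambda]})$ neither raises $2^\delta$ above $F(\delta)$ for any regular $\delta$ nor lowers it, using distributivity below $\lambda^+$ and the chain condition together with $\SCH$ above. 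Since the model already realizes the target continuum pattern, the conclusion is that this pattern persists after the extra forcing, which is exactly the content of the lemma.
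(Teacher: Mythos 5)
Your overall strategy coincides with the paper's: distributivity (Lemma \ref{lemmadistributive}) handles cardinals and the continuum function up through $\lambda$, the $\lambda^{++}$-c.c.\ (Lemma \ref{lemmachaincondition}) handles cardinals from $\lambda^{++}$ on, and a nice-name count together with $\SCH$ controls $2^\delta$ for $\delta\geq\lambda^+$. However, your treatment of the case $\delta=\lambda^+$ contains a concrete error. You assert that $j_0(\Q_{[\lambda_0,\lambda]})$ ``has size at most $2^\lambda=\lambda^+$ in the relevant model.'' In $V[G_{\lambda_0}*g_{[\lambda_0,F(\lambda)]}]$ one has $2^\lambda=F(\lambda)$, which is in general far larger than $\lambda^+$; and the forcing itself has size at most $|{}^\lambda F(\lambda)\cap V[G_{\lambda_0}]|=F(\lambda)$ (each condition is $j_0(h_p)(j_0"\lambda)$ for some $h_p:P_\kappa\lambda\to\Q_{[\lambda_0,\lambda]}$ in $V[G_{\lambda_0}]$, where $|\Q_{[\lambda_0,\lambda]}|=F(\lambda)$ and $F(\lambda)^\lambda=F(\lambda)$ there since $\cf(F(\lambda))>\lambda$). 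Consequently the nice-name bound at $\lambda^+$ is $F(\lambda)^{\lambda^+}$, not $(\lambda^+)^{\lambda^+}$, and your claim that $2^{\lambda^+}$ comes out as ``$(\lambda^+)^+$ or $\lambda^+$'' cannot be right: in the ground model of this forcing $2^{\lambda^+}=F(\lambda^+)\geq F(\lambda)$, which may be much larger than $\lambda^{++}$.

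The repair is exactly the uniform computation the paper performs for all $\delta\in[\lambda^+,\infty)$ at once, and which you only gesture at for $\delta\geq\lambda^{++}$: there are at most $F(\lambda)^{\lambda^+\cdot\delta}=F(\lambda)^\delta$ nice names for subsets of $\delta$, and since $\SCH$ holds in $V[G_{\lambda_0}*g_{[\lambda_0,F(\lambda)]}]$ one has $\mu^\nu=2^\nu$ whenever $\mu\leq 2^\nu$; applying this with $F(\lambda)\leq F(\lambda^+)\leq 2^\delta$ gives $F(\lambda)^\delta=2^\delta$, so the continuum function is undisturbed on all of $[\lambda^+,\infty)$, with no special case at $\lambda^+$. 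Once you replace your $\lambda^+$-sized bound by the correct $F(\lambda)$-sized one and run this computation, your argument matches the paper's proof.
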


\begin{proof}
By Lemma \ref{lemmadistributive}, $j_0(\Q_{[\lambda_0,\lambda]})$ is ${\leq}\lambda$-distributive in $V[G_{\lambda_0}*g_{[\lambda_0,F(\lambda)]}]$ and thus preserves cardinals in $[\omega,\lambda^+]$ and does not disturb the continuum function on the interval $[\omega,\lambda]$.

Lemma \ref{lemmachaincondition} implies that cardinals in $[\lambda^{++},\infty)$ are preserved. Furthermore, by counting nice names we will now show that the continuum function is not disturbed on $[\lambda^+,\infty)$. Working in $V[G_{\lambda_0}*g_{[\lambda_0,F(\lambda)]}]$, since $j_0(\Q_{[\lambda_0,\lambda]})$ has size at most $| ^\lambda F(\lambda)\cap V[G_{\lambda_0}]|=F(\lambda)$ and is $\lambda^{++}$-c.c., it follows that if $\delta\in[\lambda^+,\infty)$ is a cardinal then there are at most $F(\lambda)^{\lambda^+\cdot\delta}=F(\lambda)^\delta$ nice $j_0(\Q_{[\lambda_0,\lambda]})$-names for subsets of $\delta$. Since $\SCH$ holds in $V[G_{\lambda_0}*g_{[\lambda_0,F(\lambda)]}]$, it follows that for all infinite cardinals $\mu$ and $\nu$, if $\mu\leq 2^\nu$ then $\mu^\nu=2^\nu$ (see \cite[Theorem 5.22(ii)(a)]{Jech:Book}). In particular, we have $F(\lambda)\leq F(\lambda^+)\leq 2^\delta$ and thus $F(\lambda)^\delta=2^\delta$ in $V[G_{\lambda_0}*g_{[\lambda_0,F(\lambda)]}]$. Thus there are at most $2^\delta$ nice $j_0(\Q_{[\lambda_0,\lambda]})$-names for subsets of $\delta$, and the result follows.
\end{proof}


Let $J$ be a $V[G_{\lambda_0}*g_{[\lambda_0,F(\lambda)]}]$-generic filter for $j_0(\Q_{[\lambda_0,\lambda]})$.

\begin{lemma}
$k"J$ generates an $M[j(G_{\lambda_0})]$-generic filter for $j(\Q_{[\lambda_0,\lambda]})$, which we will call $K$.
\end{lemma}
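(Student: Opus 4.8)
The plan is to verify the two defining properties of genericity separately: that $k"J$ generates a filter, and that this filter meets every maximal antichain of $j(\Q_{[\lambda_0,\lambda]})$ lying in $M[j(G_{\lambda_0})]$. The first is immediate. Since $k$ is elementary and $J$ is a filter on $j_0(\Q_{[\lambda_0,\lambda]})$, and since $j=k\circ j_0$ on $V[G_{\lambda_0}]$ gives $k(j_0(\Q_{[\lambda_0,\lambda]}))=j(\Q_{[\lambda_0,\lambda]})$, the pointwise image $k"J$ consists of pairwise compatible conditions of $j(\Q_{[\lambda_0,\lambda]})$, so its upward closure $K:=\{r\in j(\Q_{[\lambda_0,\lambda]})\mid k(p)\leq r\text{ for some }p\in J\}$ is a filter.

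For genericity, the key observation I would exploit is that $J$ was chosen generic over the \emph{full} model $W:=V[G_{\lambda_0}*g_{[\lambda_0,F(\lambda)]}]$, which contains $M[j(G_{\lambda_0})]$ as an inner model. Hence, given a maximal antichain $A$ of $j(\Q_{[\lambda_0,\lambda]})$ with $A\in M[j(G_{\lambda_0})]$, it suffices to show that the preimage set $\hat A:=\{p\in j_0(\Q_{[\lambda_0,\lambda]})\mid k(p)\leq a\text{ for some }a\in A\}$, which lies in $W$ because $A$, $k\restrict j_0(\Q_{[\lambda_0,\lambda]})$, and $j_0(\Q_{[\lambda_0,\lambda]})$ are all in $W$, is predense in $j_0(\Q_{[\lambda_0,\lambda]})$. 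Indeed, predensity of $\hat A$ together with the genericity of $J$ over $W$ produces a $p\in J\cap\hat A$, whence the witnessing $a\in A$ lies in $K$ and $K\cap A\neq\emptyset$.

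Before attacking predensity I would first cut the antichains down to a manageable size. By Lemma \ref{lemmachaincondition}, $j_0(\Q_{[\lambda_0,\lambda]})$ is $\lambda^{++}$-c.c.\ in $W$, hence also in the inner model $M_0'$; applying the elementary map $k$, and using that $\crit(k)>\lambda^+$ so that $k$ fixes the cardinal $\lambda^{++}$, the poset $j(\Q_{[\lambda_0,\lambda]})=k(j_0(\Q_{[\lambda_0,\lambda]}))$ is $\lambda^{++}$-c.c.\ in $M[j(G_{\lambda_0})]$. Thus every maximal antichain $A$ as above has $|A|^{M[j(G_{\lambda_0})]}\leq\lambda^+$, and using the representation $M[j(G_{\lambda_0})]=\{j(f)(j"\lambda,\alpha)\mid f\in V[G_\kappa],\ \alpha<F(\lambda)\}$ I would write $A=j(\vec A)(j"\lambda,\alpha)$ where, by {\L}o{\'s}'s theorem, each slice of $\vec A$ may be taken to be a maximal antichain of $\Q_{[\lambda_0,\lambda]}$ in $V[G_{\lambda_0}]$.

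The hard part will be establishing the predensity of $\hat A$, which reduces to showing that below any condition one can find a condition of the form $k(p)=j(h)(j"\lambda)$ lying beneath an element of $A$; this is exactly a density statement for the seed image $k"j_0(\Q_{[\lambda_0,\lambda]})$ inside $j(\Q_{[\lambda_0,\lambda]})$. The obstacle is that a condition of $j(\Q_{[\lambda_0,\lambda]})$ in $M[j(G_{\lambda_0})]$ may have support on ``ghost coordinates,'' that is, triples whose third coordinate exceeds $\crit(k)$ and so is not directly named by the seed $j"\lambda$. I would handle this precisely as in the surgery analysis of \cite{CodyMagidor}: using the Easton-support structure of $j(\Q_{[\lambda_0,\lambda]})$ together with the closure of $\ran(k)$ under $\lambda$-sequences, one verifies that any bounded piece of such support can be matched by a seed-represented condition, so that $k"j_0(\Q_{[\lambda_0,\lambda]})$ is dense; here the lifting arguments for Cohen forcing on the $M$-side from \cite{FriedmanHonzik:EastonsTheoremAndLargeCardinals} apply almost verbatim once one identifies $\kappa$ with $\lambda$. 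Granting this density, $\hat A$ is predense, and the genericity of $K$ over $M[j(G_{\lambda_0})]$ follows.
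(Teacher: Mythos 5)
Your reduction of genericity to the predensity, in $j_0(\Q_{[\lambda_0,\lambda]})$ over the big model $W:=V[G_{\lambda_0}*g_{[\lambda_0,F(\lambda)]}]$, of the pulled-back set $\hat A$ is sound, and the implication ``if $k"j_0(\Q_{[\lambda_0,\lambda]})$ is dense in $j(\Q_{[\lambda_0,\lambda]})$ then $\hat A$ is predense'' is correct. The genuine gap is that this density claim is the entire crux, you give no argument for it, and it should not be expected to hold: $\ran(k)$ is the hull $X=\{j(h)(j"\lambda)\mid h\in V[G_{\lambda_0}]\}$, which is closed only under $\lambda$-sequences, while conditions in $j(\Q_{[\lambda_0,\lambda]})$ have domains of $M$-cardinality up to $j(\lambda)$ and may mention ordinals $\delta<j(F)(\gamma)$ lying outside $X$ --- precisely the ghost coordinates. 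Neither \cite{CodyMagidor} nor \cite{FriedmanHonzik:EastonsTheoremAndLargeCardinals} asserts that every such condition extends to one of the form $k(p)$; indeed Lemma \ref{lemmasurgery} is a statement about how \emph{sparse} the pointwise image is, not how dense. A telltale sign that something is off is that your argument never uses any closure or distributivity of $j_0(\Q_{[\lambda_0,\lambda]})$ on the $M_0'$ side: if $\ran(k)\cap j(\Q_{[\lambda_0,\lambda]})$ were dense, generics would transfer along $k$ with no distributivity hypothesis whatsoever, which is not how such transfer arguments work. (The antichain-counting digression via the $\lambda^{++}$-c.c.\ is also unnecessary for either approach.)

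The paper's proof avoids the density question entirely by absorbing the extra parameter into the poset's distributivity. Given a dense open $D\in M[j(G_{\lambda_0})]$, write $D=j(h)(j"\lambda,\alpha)=k(\widetilde h)(\alpha)$, where $\widetilde h\in M_0'$ is the function on $\pi(F(\lambda))$ defined by $\widetilde h(\xi)=j_0(h)(j_0"\lambda,\xi)$, each value being (without loss of generality) a dense open subset of $j_0(\Q_{[\lambda_0,\lambda]})$. Since $\pi(F(\lambda))<j_0(\lambda_0)$ and $j_0(\Q_{[\lambda_0,\lambda]})$ is ${<}j_0(\lambda_0)$-closed, hence ${\leq}\pi(F(\lambda))$-distributive, in $M_0'$, the intersection $\widetilde D=\bigcap\ran(\widetilde h)$ is a single dense open subset of $j_0(\Q_{[\lambda_0,\lambda]})$ lying in $M_0'\subseteq W$; genericity of $J$ over $W$ yields $p\in J\cap\widetilde D$, and then $k(p)\in k(\widetilde D)\subseteq k(\widetilde h)(\alpha)=D$. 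To repair your write-up you would need to replace the unproved density assertion with this intersection argument (or prove the density of $k"j_0(\Q_{[\lambda_0,\lambda]})$ outright, which I do not believe can be done).
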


\begin{proof}
Suppose $D\in M[j(G_{\lambda_0})]$ is an open dense subset of $j(\Q_{[\lambda_0,\lambda]})$ and let $D=j(h)(j"\lambda,\alpha)$ for some $h\in V[G_{\lambda_0}]$ with $\dom(h)=P_\kappa\lambda\times\kappa$ and $\alpha<F(\lambda)$. Without loss of generality, let us assume that every element of the range of $h$ is a dense subset of $\Q_{[\lambda_0,\lambda]}$ in $V[G_{\lambda_0}]$. We have $D=j(h)(j"\lambda,\alpha)=k(j_0(h))(j"\lambda,\alpha)$. Define a function $\widetilde{h}\in M_0[j_0(G)]$ with $\dom(\widetilde{h})=\pi(F(\lambda))$ by $\widetilde{h}(\xi)=j_0(h)(j_0"\lambda,\xi)$. Then $\dom(k(\widetilde{h}))=k(\pi(F(\lambda)))=F(\lambda)$ and we have $D=k(\widetilde{h})(\alpha)$. Now the range of $\widetilde{h}$ is a collection of $\pi(F(\lambda))$ open dense subsets of $j_0(\Q_{[\lambda_0,\lambda]})$. Since $j_0(\Q_{[\lambda_0,\lambda]})$ is ${\leq}\pi(F(\lambda))$-distributive in $M_0[j_0(G)]$, one sees that $\widetilde{D}=\bigcap\ran(\widetilde{h})$ is an open dense subset of $j_0(\Q_{[\lambda_0,\lambda]})$. Hence there is a condition $p\in J\cap\widetilde{D}$ and by elementarity, $k(p)\in k"J\cap k(\widetilde{D})\subseteq D$.
\end{proof}

\subsection{Performing surgery}

We will modify the $M[j(G_{\lambda_0})]$-generic filter $K\subseteq j(\Q_{[\lambda_0,\lambda]})$ to get $K^*$ with $j"g_{[\lambda_0,\lambda]}\subseteq K^*$. Then we will argue that $K^*$ remains an $M[j(G_{\lambda_0})]$-generic filter for $j(\Q_{[\lambda_0,\lambda]})$ using the main lemma from \cite{CodyMagidor}.

Let us define $K^*$. Working in $V[G_{\lambda_0}][g_{[\lambda_0,F(\lambda)]}]$, define
\[\dom(j(\Q_{[\lambda_0,\lambda]})):=\bigcup \{\dom(p)\mid p\in j(\Q_{[\lambda_0,\lambda]})\}\]
and let $Q$ be the partial function with $\dom(Q)\subseteq\dom(j(\Q_{[\lambda_0,\lambda]})),$
defined by $Q=\bigcup j"g_{[\lambda_0,\lambda]}$. Given $p\in K$, let $p^*$ be the partial function with $\dom(p^*)=\dom(p)$, obtained from $p$ by altering $p$ on $\dom(p)\cap\dom(Q)$ so that $p^*$ agrees with $Q$. Let
\[K^*=\{p^*\st p\in K\}.\]
Clearly, $j"g_{[\lambda_0,\lambda]}\subseteq K^*$ and it remains to argue that $p^*$ is a condition in $j(\Q_{[\lambda_0,\lambda]})$ for each $p\in K$ and that $K^*$ is an $M[j(G_{\lambda_0})]$-generic filter. This follows from the next lemma, which essentially appears in \cite{CodyMagidor}.

\begin{lemma}\label{lemmasurgery}
Suppose $B\in M[j(G_{\lambda_0})]$ with $B\subseteq j(\dom(\Q_{[\lambda_0,\lambda]}))$ and $|B|^{M[j(G_{\lambda_0})]}\leq j(\lambda)$. Then the set 
$$\mathcal{I}_B=\{\dom(j(q))\cap B\mid \textrm{$q\in \Q_{[\lambda_0,\lambda]}$}\}$$
has size at most $\lambda$ in $V[G_{\lambda_0}*g_{[\lambda_0,F(\lambda)]}]$.
\end{lemma}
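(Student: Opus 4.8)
The plan is to bound $|\mathcal I_B|$ by counting the possible ``traces'' $\dom(j(q))\cap B$ as $q$ varies. The first step is a reduction I would extract from the Easton support: because $\lambda$ is regular and is the top of the interval $[\lambda_0,\lambda]$, the support requirement at $\gamma=\lambda$ forces $|\dom(q)|<\lambda$ for \emph{every} $q\in\Q_{[\lambda_0,\lambda]}$. Writing $D=\dom(q)$, the set $\mathcal I_B$ is then exactly the range of the map $D\mapsto j(D)\cap B$ as $D$ ranges over $[\dom(\Q_{[\lambda_0,\lambda]})]^{<\lambda}$ (computed in $V[G_{\lambda_0}]$), so it suffices to bound the number of values this map takes in $V[G_{\lambda_0}*g_{[\lambda_0,F(\lambda)]}]$.

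Next I would separate each trace into its ``honest'' part, lying in $\ran(j)$, and its ``ghost'' part. Since $j(d)\in j(D)$ iff $d\in D$, the honest part is $j"(D\cap j^{-1}"B)$ and every honest element $j(d)=k(j_0(d))$ lies in $\ran(k)$; the ghost elements of $B\setminus\ran(j)$ can enter $j(D)$ only once $|D|\ge\kappa$, since it is exactly then that $j(D)\neq j"D$. To treat both parts at once I would use the factorization $j=k\circ j_0$ from the preceding subsection, where $j_0:V[G_{\lambda_0}]\to M_0'$ is the ultrapower by the normal fine measure $U_0$ on $P_\kappa\lambda$ and $\crit(k)>\lambda^+$. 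Writing $j(D)=k(j_0(D))$ and recalling that $M_0'$ is generated by the single seed $j_0"\lambda$ with $|j_0(\lambda)|^{V[G_{\lambda_0}]}\le\lambda^+$, the computation of $j(D)\cap B$ reduces to computing $j_0(D)$ against $\bar B:=k^{-1}"B$. The point of descending to $M_0'$ is that for each $c\in\bar B$ one may fix a function $g_c:P_\kappa\lambda\to V[G_{\lambda_0}]$ with $c=j_0(g_c)(j_0"\lambda)$, and then $c\in j_0(D)$ holds iff $\{P\in P_\kappa\lambda : g_c(P)\in D\}\in U_0$. Thus a trace is completely determined by the pattern $\la [\,\{P:g_c(P)\in D\}\in U_0\,] : c\in\bar B\ra$, and $\kappa$-completeness and normality of $U_0$ make this pattern rigid; in particular, for $|D|<\kappa$ it collapses to $D\cap j_0^{-1}"\bar B$, exhibiting the absence of ghosts in that range.

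The final step is to count these patterns and show there are at most $\lambda$ of them in $V^*=V[G_{\lambda_0}*g_{[\lambda_0,F(\lambda)]}]$. Here I would invoke the closure hypotheses $M^\lambda\subseteq M$ and $H(F(\lambda))\subseteq M$, the bounds $|\bar B|\le j_0(\lambda)$ and $\crit(k)>\lambda^+$, and the fact that $\SCH$ holds in $V^*$ to carry out the cardinal arithmetic; this is precisely the surgery counting lemma of \cite{CodyMagidor}, which applies essentially verbatim once one identifies the $\kappa$ there with $\lambda$ here. The main obstacle — and the genuinely new content relative to Woodin's original surgery — is the ghost contribution arising from conditions with $\kappa\le|D|<\lambda$: one must verify that, despite $B$ having $M$-size as large as $j(\lambda)$, the number of distinct ghost-patterns is still held down to $\lambda$. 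This is exactly the difficulty that \cite{CodyMagidor} resolves by allowing modifications on ghost coordinates, and I expect the bulk of the work to consist in transcribing that argument into the present notation and checking that the passage through $k$ and $j_0$ does not inflate the count.
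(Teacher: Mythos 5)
Your setup is not wrong, but the proof has a genuine gap at precisely the point where all the work lies: the final count. You reduce the problem to counting the patterns $\la [\,c\in j_0(D)\,] : c\in\bar B\ra$, where $\bar B=k^{-1}"B$ has $M_0'$-cardinality up to $j_0(\lambda)$ and hence $V[G_{\lambda_0}]$-cardinality up to $\lambda^+$. A priori there are $2^{\lambda^+}$ such patterns, and nothing you offer --- ``$\kappa$-completeness and normality of $U_0$ make this pattern rigid,'' the closure hypotheses, or $\SCH$ --- actually brings this down to $\lambda$; indeed $\SCH$ plays no role in this lemma at all (it is used in Lemma \ref{lemmadonotdisturb} for nice-name counting). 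You then defer the entire argument to \cite{CodyMagidor} (``applies essentially verbatim,'' ``the bulk of the work consists in transcribing that argument''), which is exactly the step a proof of this lemma must supply. As written, the proposal identifies the difficulty but does not resolve it.

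The missing idea is a covering argument, and it renders the ghost/honest decomposition and the factorization through $j_0$ unnecessary. Since $|B|^{M[j(G_{\lambda_0})]}\leq j(\lambda)$, one can write $B=j(h)(j"\lambda,\alpha)$ for some $\alpha<F(\lambda)$ and some $h\in V[G_{\lambda_0}]$ with $h:P_\kappa\lambda\times\kappa\to P_{\lambda^+}(\dom(\Q_{[\lambda_0,\lambda]}))^{V[G_{\lambda_0}]}$, i.e.\ every value of $h$ is a set of size at most $\lambda$. Then $E:=\bigcup\ran(h)$ has size at most $\lambda$ in $V[G_{\lambda_0}]$, and the crucial point is that $B\subseteq\bigcup\ran(j(h))=j(E)$. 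Consequently, for any $q\in\Q_{[\lambda_0,\lambda]}$, putting $d:=\dom(q)\cap E$ (which has size $<\lambda$ by the Easton support at $\lambda$, as you observed) one gets
\[\dom(j(q))\cap B=\dom(j(q))\cap j(E)\cap B=j(d)\cap B,\]
so $\mathcal{I}_B\subseteq\{j(d)\cap B\mid d\in P_\lambda(E)^{V[G_{\lambda_0}]}\}$, a set of size at most $\lambda^{<\lambda}=\lambda$ computed in $V[G_{\lambda_0}]$. That is the entire proof; if you wish to keep your pattern framework, the same covering set $E$ is what tames the ghost coordinates, but it must be produced explicitly rather than cited.
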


\begin{proof}
Let $B$ be as in the statement of the lemma and let $B=j(h)(j"\lambda,\alpha)$ where $h:P_\kappa\lambda^{V}\times\kappa\to P_{\lambda^+}(\dom(\Q_{[\lambda_0,\lambda]}))^{V[G_{\lambda_0}]}$, $\alpha<F(\lambda)$, and $h\in V[G_{\lambda_0}]$. Then $\bigcup\ran(h)$ is a subset of $\dom(\Q_{[\lambda_0,\lambda]})$ in $V[G_{\lambda_0}]$ with $|\bigcup\ran(h)|^{V[G_{\lambda_0}]}\leq\lambda$. Since $V[G_{\lambda_0}]\models \lambda^{<\lambda}=\lambda$ (in $V[G_{\lambda_0}]$ we have $\GCH$ on $[\lambda_0,\lambda]$ and $\lambda$ is a regular cardinal), it will suffice to show that 
$$\mathcal{I}_B\subseteq\{j(d)\cap B\mid d\in P_\lambda(\bigcup\ran(h))^{V[G_{\lambda_0}]}\}.$$
Suppose $\dom(j(q))\cap B\in \mathcal{I}_B$ where $q\in \Q_{[\lambda_0,\lambda]}$. We will show that $\dom(j(q))\cap B=j(d)\cap B$ for some $d\in P_\lambda(\bigcup\ran(h))^{V[G_{\lambda_0}]}$. Let $d:=\dom(q)\cap\bigcup\ran(h)$, then $\dom(j(q))\cap B=j(d)\cap B$ since
$$j(d)=\dom(j(q))\cap\bigcup\ran(j(h))\supseteq \dom(j(q))\cap B.$$
\end{proof}

It now follows from Lemma \ref{lemmasurgery} exactly as in \cite{CodyMagidor} that $K^*$ is an $M[j_0(G_{\lambda_0})]$-generic filter for $j(\Q_{[\lambda_0,\lambda]})$. Now let us show that $K^*\subseteq j(\Q_{[\lambda_0,\lambda]})$. Suppose $p\in j(\Q_{[\lambda_0,\lambda]})$, then since $|\dom(p)|^{M[j(G_{\lambda_0})]}<j(\lambda)$, it follows from Lemma \ref{lemmasurgery}, that the set $\mathcal{I}_{\dom(p)}:=\{\dom(j(q))\cap\dom(p)\st q\in\Q_{[\lambda_0,\lambda]}\}$ has size at most $\lambda$ in $V[G_{\lambda_0}*g_{[\lambda_0,F(\lambda)]}]$. Let $\<I_\alpha\st\alpha<\lambda\>\in V[G_{\lambda_0}*g_{[\lambda_0,F(\lambda)]}]$ be an enumeration of $\mathcal{I}_{\dom(p)}$. By the maximality of the filter $K$, for each $\alpha<\lambda$ we can choose $q_\alpha\in K$ such that $\dom(j(q_\alpha))\cap p=I_\alpha$. It follows that $\<j(q_\alpha)\st\alpha<\lambda\>\in M[j(G_{\lambda_0})]$ because $M[j(G_{\lambda_0})]$ is closed under $\lambda$-sequences in $V[G_{\lambda_0}*g_{[\lambda_0,F(\lambda)]}]$. Since $j(\Q_{[\lambda_0,\lambda]})$ is ${<}j(\lambda_0)$-directed closed, it follows that the partial master condition $m:=\bigcup\{j(q_\alpha):\alpha<\lambda\}$ is a condition in $j(\Q_{[\lambda_0,\lambda]})$, and moreover, $q^*$ can be computed in $M[j(G_{\lambda_0})]$ by comparing $p$ and $m$.

To see that $K^*$ is $M[j(G_{\lambda_0})]$-generic, suppose $A$ is a maximal antichain of $j(\Q_{[\lambda_0,\lambda]})$ in $M[j(G_{\lambda_0})]$. Since $\Q_{[\lambda_0,\lambda]}$ is $\lambda^+$-c.c. in $V[G_{\lambda_0}]$, it follows by elementarity that $\dom(A):=\bigcup\{\dom(r)\st r\in A\}$ has size at most $j(\lambda)$ in $M[j(G_{\lambda_0})]$. Hence by Lemma \ref{lemmasurgery}, we see that $\mathcal{I}_{\dom(A)}:=\{\dom(j(q))\cap \dom(A)\st q\in \Q_{[\lambda_0,\lambda]}\}$ has size at most $\lambda$ in $V[G_{\lambda_0}*g_{[\lambda_0,F(\lambda)]}]$ and is therefore in $M[j(G_{\lambda_0})]$. Using this one can show, as in \cite{CodyMagidor}, that there is a bit-flipping automorphism $\pi_A$ of $j(\Q_{[\lambda_0,\lambda]})$ in $M[j(G_{\lambda_0})]$ such that if $r\in K$ and $\dom(r)\subseteq\dom(A)$ then $\dom(\pi_A(r))=\dom(r)$ and $\pi_A(r)=r^*$. Then since $\pi_A^{-1}[A]\in M[j(G_{\lambda_0})]$ is a maximal antichain of $j(\Q_{[\lambda_0,\lambda]})$, and $K$ is generic for $j(\Q_{[\lambda_0,\lambda]})$ over $M[j(G_{[\lambda_0,\lambda]})]$, it follows that there is a condition $s\in K\cap \pi_A^{-1}[A]$. Then $\pi_A(s)=s^*\in K^*\cap A$, and therefore $K^*$ is generic for $j(\Q_{[\lambda_0,\lambda]})$ over $M[j(G_{\lambda_0})]$.

Thus we may lift the embedding to
\[j:V[G_{\lambda_0}*g_{[\lambda_0,\lambda]}]\to M[j(G_{\lambda_0})*j(g_{[\lambda_0,\lambda]})]\]
where $j(g_{[\lambda_0,\lambda]})=K^*$ and $j$ is a class of $V[G_{\lambda_0}*g_{[\lambda_0,F(\lambda)]}*J]$. It follows that $M[j(G_{\lambda_0})*j(g_{[\lambda_0,\lambda]})]$ is closed under $\lambda$-sequences in $V[G_{\lambda_0}*g_{[\lambda_0,F(\lambda)]}*J]$ and that
\[M[j(G_{\lambda_0})*j(g_{[\lambda_0,\lambda]})]=\{j(h)(j"\lambda,\alpha)\st h:P_\kappa\lambda\times\kappa\to V, \alpha<F(\lambda),h\in V[G_{\lambda_0}*g_{[\lambda_0,\lambda]}]\}.\]

Since the forcing $g_{[\lambda^+,F(\lambda)]}*J\subseteq \Q_{[\lambda^+,F(\lambda)]}*j_0(\Q_{[\lambda_0,\lambda]})$ is ${\leq}\lambda$-distributive in $V[G_{\lambda_0}*g_{[\lambda_0,\lambda]}]$, it follows that the pointwise image $j[g_{[\lambda^+,F(\lambda)]}*J]$ generates an $M[j(G_{\lambda_0})*j(g_{[\lambda_0,\lambda]})]$-generic filter for $j(\Q_{[\lambda^+,F(\lambda)]}*j_0(\Q_{[\lambda_0,\lambda]}))$, denote this filter by $j(g_{[\lambda^+,F(\lambda)]}*J)$. Then the embedding lifts to 
\[j:V[G_{\lambda_0}*g_{[\lambda_0,F(\lambda)]}*J]\to M[j(G_{\lambda_0})*j(g_{[\lambda_0,F(\lambda)]})*j(J)]\]
where $j$ is a class of $V[G_{\lambda_0}*g_{[\lambda_0,F(\lambda)]}*J]$, witnessing that $\kappa$ is $\lambda$-supercompact in this model.

\subsection{Controlling the continuum function at $F(\lambda)^+$ and above}\label{sectionlaststep}

In the model $V[G_{\lambda_0}*g_{[\lambda_0,F(\lambda)]}*J]$ one has $2^\gamma=F(\gamma)$ for every regular cardinal $\gamma\leq F(\lambda)$ and $\GCH$ holds at all cardinals greater than or equal to $F(\lambda)^+$. Working in $V[G_{\lambda_0}*g_{[\lambda_0,F(\lambda)]}*J]$, let $\mathbb{E}$ be the Easton-support product of Cohen forcing
\[\mathbb{E}:=\prod_{\gamma\in[F(\lambda)^+,\infty)\cap\REG}\Add(\gamma,F(\gamma)).\]
Let $E$ be generic for $\mathbb{E}$ over $V[G_{\lambda_0}*g_{[\lambda_0,F(\lambda)]}*J]$. Standard arguments \cite{Easton:PowersOfRegularCardinals} can be used to see that in $V[G_{\lambda_0}*g_{[\lambda_0,F(\lambda)]}*J*E]$, for every regular cardinal $\gamma$ we have $2^\gamma=F(\gamma)$.
Since $\mathbb{E}$ is ${\leq}F(\lambda)$-closed in $V[G_{\lambda_0}*g_{[\lambda_0,F(\lambda)]}*J]$, it follows that the pointwise image $j[E]$ generates an $M[j(G_{\lambda_0})*j(g_{[\lambda_0,F(\lambda)]})*j(J)]$-generic filter for $j(\mathbb{E})$, which we will denote by $j(E)$. Then $j$ lifts to
\[j:V[G_{\lambda_0}*g_{[\lambda_0,F(\lambda)]}*J*E]\to M[j(G_{\lambda_0})*j(g_{[\lambda_0,F(\lambda)]})*j(J)*j(E)]\]
where $j$ is a class of $V[G_{\lambda_0}*g_{[\lambda_0,F(\lambda)]}*J*E]$ witnessing that $\kappa$ is $\lambda$-supercompact in that model.
\end{proof}

\section{Open Questions}

First let us discuss the problem of globally controlling the continuum function on the regular cardinals while preserving multiple instances of partial supercompactness. Suppose $\GCH$ holds and we have regular cardinals $\kappa_0<\eta_0<\kappa_1<\eta_1$ such that for each $\alpha\in\{0,1\}$, $\kappa_\alpha$ is $\eta_\alpha$-supercompact.
Additionally, assume $F$ is a function satisfying the requirements of Easton's theorem (E1) and (E2), and that for each $\alpha$ there is a $j_\alpha:V\to M_\alpha$ with critical point $\kappa_\alpha$ such that $\kappa_\alpha$ is closed under $F$, $M^{\eta_\alpha}\subseteq M$, $H(F(\eta_\alpha))\subseteq M$, and for each regular cardinal $\gamma\leq\eta_\alpha$, $(|j_\alpha(F)(\gamma)|=F(\gamma))^V$. Then, as a corollary to the proof of Theorem \ref{theorem1} above, we obtain the following.
\begin{corollary}\label{corollary}
There is a cardinal preserving forcing extension in which $2^\gamma=F(\gamma)$ for every regular cardinal $\gamma$ and $\kappa_\alpha$ remains $\eta_\alpha$-supercompact for $\alpha\in\{0,1\}$. 
\end{corollary}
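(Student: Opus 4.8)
The plan is to run the argument of Theorem \ref{theorem1} twice, once for each pair $(\kappa_\alpha,\eta_\alpha)$, over one global forcing, using the separation $\eta_0<\kappa_1$ to keep the two liftings from interfering. For each $\alpha$ write $\lambda_{0,\alpha}$ for the greatest closure point of $F$ that is ${\leq}\eta_\alpha$, let $j_{\alpha,0}$ be the ultrapower factor of $j_\alpha$ (obtained exactly as $j_0$ is obtained from $j$ in the proof of Theorem \ref{theorem1}), and set $\S_\alpha:=j_{\alpha,0}(\Q_{[\lambda_{0,\alpha},\eta_\alpha]})$. The forcing $\P$ will be the $\ORD$-length Easton-support iteration of Easton-support products of Cohen forcing making $2^\gamma=F(\gamma)$ at every regular $\gamma$, with the two extra forcings $\S_0$ and $\S_1$ inserted immediately after the blocks ending at $F(\eta_0)$ and at $F(\eta_1)$, respectively. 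By Lemma \ref{lemmaeaston} together with two applications of Lemma \ref{lemmadonotdisturb}, forcing with $\P$ preserves all cardinals and achieves $2^\gamma=F(\gamma)$ for every regular $\gamma$.

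The key structural point is that, since $\kappa_1$ is closed under $F$ and $\eta_0<\kappa_1$, we have $F(\eta_0)<\kappa_1$; hence the whole apparatus attached to $\kappa_0$ — its block $\Q_{[\lambda_{0,0},F(\eta_0)]}$ together with the extra forcing $\S_0$ — is forcing at levels below $\kappa_1$ and, again by closure of $\kappa_1$ under $F$, has size less than $\kappa_1$, so it belongs to $H(\kappa_1)$ and is absorbed into $\P_{\kappa_1}$. Consequently $j_1$ fixes this apparatus pointwise and $j_1"G_{\kappa_1}=G_{\kappa_1}$, so I would lift $j_1$ by applying the proof of Theorem \ref{theorem1} verbatim with $(\kappa,\lambda)$ replaced by $(\kappa_1,\eta_1)$: the twisting of Lemmas \ref{lemmaregular} and \ref{lemmasingular} yields the $M_1$-generic for the tail $[\kappa_1,j_1(\kappa_1))^{M_1}$, the master-condition argument lifts through $\P_{[\kappa_1,\lambda_{0,1})}$, the surgery of Lemma \ref{lemmasurgery} (using $\S_1$) lifts through $\Q_{[\lambda_{0,1},\eta_1]}$, and the pointwise image transports the ${\leq}F(\eta_1)$-closed tail above $F(\eta_1)^+$. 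Every one of these steps is blind to the forcing attached to $\kappa_0$.

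To lift $j_0$ I would follow the proof of Theorem \ref{theorem1} with $(\kappa,\lambda)$ replaced by $(\kappa_0,\eta_0)$ through the twisting, master-condition, and surgery stages (the last using $\S_0$), obtaining a lift $j_0\colon V[G^0]\to M_0[j_0(G^0)]$ with target closed under $\eta_0$-sequences, where $G^0$ is the generic obtained by forcing up through $\S_0$. The hardest step, and the main obstacle, is then to lift $j_0$ through the tail $\mathbb{T}$ of all forcing above $F(\eta_0)$, which now contains the entire $\kappa_1$-block and the extra forcing $\S_1$. Since $F(\eta_0)^+\leq\kappa_1$, every Cohen factor of $\mathbb{T}$ is ${<}\gamma$-closed for some $\gamma\geq F(\eta_0)^+$, hence ${\leq}F(\eta_0)$-closed; the only factor that is a priori merely ${\leq}\eta_1$-distributive rather than closed is $\S_1$, and its closure is destroyed solely by the $\eta_1^+$-c.c. forcing $g_{[\lambda_{0,1},\eta_1]}$ at the $\kappa_1$-level, exactly as in Lemma \ref{lemmadistributive}. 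I would remove this obstacle as in Lemma \ref{lemmadistributive}: since $\S_1$ is ${\leq}\eta_1$-closed before $g_{[\lambda_{0,1},\eta_1]}$ is added and $g_{[\lambda_{0,1},\eta_1]}$ is $\eta_1^+$-c.c., Easton's lemma lets one reorganize the product so that $\S_1$ is forced in its closed form, making $\mathbb{T}$ genuinely ${\leq}F(\eta_0)$-closed; the pointwise image $j_0"T$ then generates the required $M_0[j_0(G^0)]$-generic for $j_0(\mathbb{T})$. This produces $j_0$ and $j_1$ as classes of the final extension, witnessing that $\kappa_0$ remains $\eta_0$-supercompact and $\kappa_1$ remains $\eta_1$-supercompact, as desired.
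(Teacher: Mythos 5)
Your proposal is correct and takes essentially the same route as the paper's: two applications of the proof of Theorem \ref{theorem1}, with the entire $\kappa_0$-apparatus (its Cohen blocks together with $\S_0$) sitting below $\kappa_1$ as forcing of size ${<}\kappa_1$ so that $j_1$ lifts through it by Levy--Solovay-type reasoning, and the $\kappa_1$-apparatus lifted along $j_0$ by the pointwise-image transfer using its high distributivity. One small correction: the reorganization via Easton's lemma shows only that $\S_1$ is ${\leq}\eta_1$-\emph{distributive} in the actual extension (exactly as in Lemma \ref{lemmadistributive}), not that the tail becomes genuinely closed, but distributivity is all the pointwise-image argument requires, so your conclusion stands.
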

This corollary can be obtained by essentially applying the above proof of Theorem \ref{theorem1} twice. For example, first we carry out the proof of Theorem \ref{theorem1} with $\kappa_0$ and $\eta_0$ in place of $\kappa$ and $\lambda$ and where the forcing iteration used terminates before stage $\kappa_1$. Lifting the embedding $j_0:V\to M_0$ witnessing that $\kappa_0$ is $\eta_0$-supercompact requires the ``extra forcing'' that depends on $j_0$. Let $\P_0$ denote the iteration defined so far, including the extra forcing. Since $\P_0$ has size less than the critical point $\kappa_1$ of the next embedding $j_1:V\to M_1$ witnessing the $\eta_1$-supercompactness of $\kappa_1$, it follows by the Levy-Solovay theorem that $j_1$ lifts through the iteration performed so far. Next, working in $V^{\P_0}$, we perform an iteration for controlling the continuum function that picks up where the last one left off. Call the iteration $\P_1$, and lift $j_1$ through the iteration $\P_0*\P_1$ just as we lifted $j_0$ through $\P_0$. Furthermore, since $\P_1$ is highly distributive in $V^{\P_0}$ the first embedding $j_0$ will easily extend to $V^{\P_0*\P_1}$.

Corollary \ref{corollary} only covers a simple configuration of partially supercompact cardinals. Is a more general result possible? It seems that the need for the ``extra forcing'' in our proof of Theorem \ref{theorem1} prevents the method from providing a clear strategy for obtaining a more general result in which more complicated configurations of partially supercompact cardinals are preserved. It may be the case that the uniformity of the Sacks-forcing method, which is applied in \cite{FriedmanHonzik:EastonsTheoremAndLargeCardinals} to obtain analogous global results for measurable as well as strong cardinals, could lead to an answer to Question \ref{questionglobal} below. One would desire a two-cardinal version of Sacks forcing for adding subsets to $\kappa$ that satisfies $\lambda$-fusion.
\begin{question}\label{questionglobal}
Assuming $\GCH$, and given a class of partially supercompact cardinals $S$ and a function $F$ from the class of regular cardinals to the class of cardinals satisfying Easton's requirements (E1) and (E2), under what conditions can one force the continuum function to agree with $F$ at all regular cardinals, while preserving cardinals as well as the full degree of partial supercompactness of each cardinal in $S$?
\end{question}

Another potential way of strengthening Theorem \ref{theorem1} is to weaken the hypothesis. This was done for the analagous theorem concerning measurable cardinals in \cite{FriedmanHonzik:EastonsThmAndLCFromOptimal}. In this direction, we pose the following question.
\begin{question}
Can the hypothesis of Theorem 1 be weakened by replacing the assumption $H(F(\lambda)) \subseteq M$ by the weaker assumption ``$V$ and $M$ have the same cardinals up to and including $F(\lambda)$''? Or, in the special case when $F(\lambda) = \mu^+$ for some regular cardinal $\mu$, by the ostensibly stronger assumption that $H(\mu) \subseteq M$ and $(\mu^+)^M = \mu^+$? (Note however that the latter assumption is actually optimal for the analogous case when one wants to find a model with a measurable cardinal $\kappa$ with $2^\kappa = \mu^+$, where $\mu = \kappa^{+n}$ for some $n>0$; see \cite{FriedmanHonzik:EastonsThmAndLCFromOptimal} for more details.)

\end{question}


\end{document}